\titleformat{\section}[hang]{\normalfont\Large\bfseries}{\thesection}{1em}{}
\titleformat{\subsection}[hang]{\normalfont\large}{\thesubsection}{1em}{}
\newtheorem{Theorem}{Theorem}[section]
\newtheorem{Lemma}[Theorem]{Lemma}
\newtheorem{Corollary}[Theorem]{Corollary}
\newtheorem{Remark}[Theorem]{Remark}
\newtheorem{Proposition}[Theorem]{Proposition}
\theoremstyle{definition}
\newtheorem{Definition}[Theorem]{Definition}
\newtheorem{Assumption}[Theorem]{Assumption}
\newcommand\CR{{\mathcal  R}}
\newcommand\R{{\mathbb R}}
\newcommand\X{{\R^d}}
\newcommand\N{{\mathbb N}}
\newcommand\CM{{\mathcal M}}
\newcommand\B{{\mathcal B}}
\newcommand\La{\Lambda}
\newcommand\la{\lambda}
\newcommand\al{\alpha}
\newcommand\Ga{\Gamma}
\newcommand\ga{\gamma}
\newcommand{\K}{{\mathbb{K}}}
\newcommand{\Z}{\mathbb Z}
\providecommand{\keywords}[1]
{
	\small	
	\textit{Keywords:} #1
}
\title{Gibbs measure over the cone of vector-valued discrete measures}
\author{First Author Name$^{a}$$^{*}$, Second Author Name$^{b}$$^{c}$, etc.$^{a}$$^{c}$ \\
	\small $^{a}$Department, University, City, Country \\
	\small $^{b}$Department, University, City, Country \\
	\small $^{c}$Department, University, City, Country \\\\
	\small $^{*}$Corresponding author: first name, initials, %surname; \tt{email.address}
}
\author{Luca Di Persio \footnote{College of Mathematics, 
		Department of Computer Science, University of Verona, Strada le Grazie 15 - 37134 Verona - Italy, luca.dipersio@univr.it}
	\and  Yuri Kondratiev \footnote{This work has been developed while Yuri was a member of the Dragomanov University, Kyiv.}
	\and Viktorya Vardanyan \footnote{Department of Mathematics, University of Trento, Via Sommarive 14-38123 Povo(TN)-Italy, viktorya.vardanyan@unitn.it}
}
\date{} %leave blank
\begin{document}
	\thispagestyle{empty}
	\maketitle
	
	\begin{abstract}  
	
	We consider a gas whose each particle is characterised by a pair $(x,v_x)$ with the 
position $x\in \mathbb R^d$ and the velocity $v_x\in \mathbb R^d_0= \mathbb R^d\setminus \{0\}$. 
We define Gibbs measures on the cone of vector-valued measures and aim to prove their existence. We introduce the family of probability measures $\mu_\lambda$  on the cone $\mathbb K(\mathbb R^d)$. We define local Hamiltonian and partition functions for a positive, symmetric, bounded and measurable pair potential. Using those above, we define Gibbs's measure as a solution to the Dobrushin-Lanford-Ruelle equation. In particular, we focus on the subset of tempered Gibbs measures. To prove the existence of the Gibbs measure, we show that the subset of tempered Gibbs measures is non-empty and relatively compact.

	\end{abstract} \hspace{10pt}

\keywords{Interacting particle systems, Vector-valued Radon measures, DLR equation, Gibbs measure, Pair potential, Hamiltonian, Partition function, Local Gibbs specification }

\section{Introduction}

A classical free gas is the simplest model in which the interactions between the atoms are entirely ignored, and the atoms are treated as classical particles. Poisson measures give the equilibrium states of such gases.  The states of interacting gases are known as Gibbs measures, and Dobrushin, Lanford, and Ruelle introduced the existence problem of such measures. The Dobrushin-Lanford-Ruelle(DLR) theory involves the use of perturbation techniques to study the behaviour of a system in the presence of interactions, see \cite{DO1}, \cite{DO2}, \cite{LR}, \cite{RU1}, \cite{RU2}.  Kondratiev, Pasurek, R{\"o}ckner \cite{KP} presented the existence of Gibbs measures associated with classical particle systems in the continuum. The existence of Gibbs measures over the cone of discrete Radon measures on $\mathbb R^d$ is studied by  Kondratiev, Pasurek, R{\"o}ckner, Hagedorn \cite{HK}. They consider interacting particle systems in the continuum $\mathbb R^d, d \in \N $, in which each particle $x$ is attached a characteristic mark $s_x$. We aim to study the generalisation for the case of the cone of vector-valued Radon measures. We consider a gas whose each particle is characterised by a pair $(x,v_x)$ with the position $x\in \X$ and the velocity $v_x\in \R_0^d= \X\setminus \{0\}$. 
 We formulate the problem of Gibbs measures on the cone of vector-valued measures
$$\K(\X)= \bigg\{\eta=\sum_{x\in \tau(\eta)} v_x\delta_x \in \CM(\X) \bigg| \tau(\eta)= \{x\;|\; \eta(\{x\})=:v_x\}\subset \X\bigg\}.$$\par
We aim to prove the existence of Gibbs measure on the aforementioned cone. We consider the case of a positive pair potential $\phi(x, y)$ having finite range. 
 We are going to restrict ourselves to a set $G^t$ of tempered Gibbs measures. Gibbs measure $\ga \in G(\phi)$ is constructed as a cluster point of the net of specification kernels $\{\pi_\Lambda\}_ {\Lambda \in \mathcal{B}_c(\mathbb{R}^d)\}}$.  The essential part of the proof of the main result is to prove the equicontinuity of the local specification kernels. For this purpose, we demonstrate the exponential integrability of a specific Lyapunov functional,  restricted to a small cube $Q_k \subset \mathbb R^d$ and then extend the obtained estimate to large volumes using the consistency property of the local Gibbs specification. We have to emphasise the weak dependence of such bounds on the boundary values $\xi \in \mathbb K(\X)$.
 Then, using the local equicontinuity and (Dobrushin-Lanford-Ruelle) equilibrium equation, the existence of the Gibbs measure will be proved.
 The structure of the paper is following. In Section 2, we present configuration spaces. We introduce the family of probability measures $\mu_\lambda$ on $\K(\X)$ and their properties. We define pair potential and partition of the space $\X$. In Section 3, we define Hamiltonian and partition functions and find bounds for them. In Subsection 3.2, we introduce local Gibbs specification and the corresponding Gibbs measures as solutions to the DLR equation. Moreover, we introduce the set of tempered configurations and the set of tempered Gibbs measures. In Section 4, we formulate our main Theorem \ref{Main Result} on the existence. To show that \( G^t(\phi) \) is non-empty, we construct a measure \( \gamma \) by taking the weak limit of a sequence of finite-volume Gibbs measures with suitable boundary conditions.  In Subsection 4.1, we prove
 the exponential integrability of the following map $ \mathbb k(\X) \ni \eta \mapsto \beta \eta (\mathcal{Q}_k)^2$, for each $k \in  \mathbb Z$, which has a role of Lyapunov functional. Then, we prove the weak dependence of the specification kernels on boundary conditions, both for small volumes $\Lambda:= \mathcal{Q}_k$ and in the thermodynamic limit $ \Lambda \nearrow \X$. In Subsection 4.2, we use the results obtained in Subsection 4.1 to prove equicontinuity of the local specification kernels $\{\pi_\Lambda(d\eta| \xi) \ | \  \Lambda \in \mathcal{Q}_c(\mathbb{R}^d)\}$, with fixed tempered boundary condition  $\xi \in \K^t(\X)$. Then, using the local equicontinuity and (Dobrushin-Lanford-Ruelle) equilibrium equation, we prove the existence of the Gibbs measure.

\section{Description of the model}
\subsection{Preliminaries on configuration spaces}
\vspace{10pt}
Consider the Euclidean space $\X$ and the locally compact pointed space $\R^d_0=\X\setminus \{0\}$. Let $\Ga(\R^d_0 \times \X)$ be the configuration space over 
$\R^d_0\times \X$, i.e., the set of locally finite (i.e.,  finite in any compact)  subsets $\zeta=\{(v,x)\}\subset \R^d_0 \times \X$.  We equip this space with the vague topology \cite{AKR}. Corresponding $\sigma$-algebra is denoted by $\B(\Ga(\R^d_0 \times \X))$. 
Between all configurations, we consider subspace $\Ga_{p}(\R^d_0 \times \X)$
	of so-called pinpointed configurations. 
	By definition, $\Ga_{p}(\R^d_0 \times \X)$ consists of all configurations
	$\zeta \in \Ga (\R^d_0 \times \X)$ such that if $(v_1, x_1), (v_2,x_2)\in \Ga(\R^d_0 \times \X)$
	and $ (v_1,x_1)\neq (v_2,x_2)$ then $x_1\neq x_2$. Thus, the configuration
	can not contain two points  $(v_1,x), (v_2,x)$ with $v_1=v_2$. As easily seen
	$\Ga_{p}(\R^d_0 \times \X)\in \B(\Ga (\R^d_0 \times \X))$. For $\zeta\in \Ga_{p}(\R^d_0 \times \X)$
	we can write a representation
	$$
	\zeta =\{ (v_x, x)\} ,\; \tau(\zeta)= \{x\;|\; (v_x,x)\in \zeta\}\subset \X.
	$$
	For any $\zeta\in \Ga_p (\R^d_0 \times \X)$ and a compact $\La\in \B_c(\X)$ introduce
	a local velocity functional
	$$
	V_\La (\zeta) = \sum_{x\in\tau(\zeta)\cap \La} |v_x|\leq \infty.
	$$
	Define the space $\Pi(\R^d_0\times\X)\subset  \Ga_p (\R^d_0 \times \X)$ 
	
	$$
	\Pi(\R^d_0\times\X) =\{ \zeta\in \Ga_p (\R^d_0 \times \X) \; \forall  \La\in \B_c(\X) \;|\;  V_{\La}(\zeta)<\infty\}.
	$$
	Denote by $\CM(\X,\X)$ the set of all vector-valued Radon measures on $\B(\X)$, i.e.,
	$$
	\CM(\X,\X) \ni \mu: \B(\X) \to \X,\; \forall \La\in \B_c(\X)\;\;|\mu(\La)| <\infty.
	$$
	Introduce the mapping
	$$
	\Pi(\R^d_0\times\X) \ni \zeta \to \eta= \CR\zeta =\sum_{x\in \tau(\zeta)} v_x\delta_x \in \CM(\X)
	$$
	and denote by $\K(\X)$ the image set
	$$
	\K(\X)= \CR(\Pi(\R^d_0\times\X)) \subset \CM(\X).
	$$
 $$\K(\X)= \bigg\{\eta=\sum_{x\in \tau(\eta)}v_x\delta_x \in \CM(\X) \bigg| \tau(\eta)= \{x\;|\; \eta(\{x\})=:v_x\}\subset \X\bigg\}.$$
The topology on $\K(\X)$ is the induced one from $\Ga(\R^d_0, \X)$. By Kondratiev, Di Persio and Vardanyan, new analytical tools are developed to study the mentioned models \cite{KD}. 
\vspace{10pt}

\subsection{ Measures on $\K(\X)$}
\vspace{10pt}
We construct  measures on $\mathbb K(\X)$ as the images under the reflection map $\CR$ of proper measures on the space $\Pi(\R^d_0\times\X)$. To this end, we start with the simplest examples related to Poisson measures. \par
Let us fix a non-atomic Radon measure $\la$ on $\R^d_0$ with following properties:
	$$
	\la(\R^d_0) =\infty,
	$$
	$$
	\forall n\in \N\;\;  \int_{\R^d_0} |v|^n \la(dv) <\infty.
	$$
	As a concrete example of such measure, we will have in mind
	$$
	\la(dv)= \frac{1}{|v|^d} e^{-|v|^2} dv.
	$$
	This measure modifies the usual Maxwell distribution for velocities in statistical physics
	by a singularity at $0$. A wider class of examples of such measures contains elements of the form
	$$
	\la(dv) = \frac{1}{|v|^\al} e^{-|v|^\beta} dv,
	$$
	with $\al\in [d, d+1), \beta >0$.\par
	
	Denote by $m(dx)$ the non-atomic Lebesgue measure on $\X$ and introduce the measure
	$$
	\sigma(dv,dx)= \lambda(dv) m(dx)
	$$
    on $\R^d_0 \times \X$, which is called an intensity measure.
    For any $\La \in \B_c(\R^d_0\times \X)$ we have the following disjoint decomposition 
 $$\Ga(\La)=\bigcup_{n=0}^{\infty}\Ga^n(\La),$$
 where $\Ga(\La):=\{\zeta \in \Ga(\R^d_0 \times \X): \zeta \subset \La\}$ is the set of all configurations supported in $\La$ and  $\Ga^n(\La):=\{\zeta \in \Ga(\La): |\zeta|=n \}$ is the set of $n$-point configurations.
 Lebesgue-Poisson measure on $\Ga(\La)$ is defined as 
 $$\mathcal{L}_\sigma:= \sum_{n=0}^\infty \frac{1}{n!}\sigma^{(n)},$$
 where $\sigma^{(n)}:=(\la \otimes m)^{\otimes n} \circ sym_n^{-1}$ is the measure on $\Ga^n(\La)$. Poisson measure on $\Ga(\La)$ is defined as $\pi_\sigma^\La:= e^{-\sigma(\La)}\mathcal{L}_\sigma$.
    
    There is a standard definition of a Poisson measure 
	$\pi_\sigma$ 
	on $\Ga(\R^d_0 \times \X)$. Then we can proceed as follows. For any $\psi\in C_0(\R^d_0 \times \X) 
	$ (continuous functions with compact supports) define
	$$
	<\psi, \zeta>=\sum_{(v,x)\in \zeta} \psi(v,x),\;\; \zeta \in \Ga(\R^d_0 \times \X).
	$$
	The Poisson measure $\pi_{\sigma} $ is defined via its Laplace transform
	$$
	\int_{\Ga(\R^d_0 \times \X)} e^{<\psi,\zeta>} \pi_{\sigma}  (d\zeta)=
	\exp{\int_{\R^d_0 \times \X} (e^{\psi(v,x)} -1) } \la(dv) m(dx).
	$$
	As a consequence of this definition we have
	$$
	\int_{\Ga(\R^d_0 \times \X)} <\psi,\zeta> \pi_\sigma (d\zeta) = \int_{\R^d_0 \times \X} \psi(v,x) \la(dv) m(dx),
	$$
	and this relation can be extended to the case of any $\psi\in L^1 (\la\otimes m)$.
	
	The Poisson measure $\pi_{\sigma}$ is concentrated on $\Pi(\R^d_0\times\X)$, i.e.,
		
		$$
		\pi_{\sigma} (\Pi(\R^d_0\times\X))=1.
		$$
To obtain measures on $\K(\X)$, we use the pushforward of measures on $\Pi(R^d_0\times \X)$ under the mapping $\CR$.\par 
Denote $\mu_\la$ the image measure on $\K(\X)$ under the reflection map corresponding to $\pi_\sigma$. For $h\in\X$ and $\psi\in C_0(\X) $, introduce a function
	$L_{h,\psi}: \K(\X) \to \R$ via
	$$
	L_{h,\psi}(\eta)=  <h\otimes \psi, \eta> = \int_{\X} \psi(x) <h,\eta(dx)> = \sum_{x\in\tau(\eta)} \psi(x) <h,v_x>.
	$$
Then the definition of the Poisson measure implies
	\begin{equation}
 \label{measure1}
	\int_{\K} e^{<h\otimes \psi, \eta>} \mu_\la (d\eta)= \exp( \int_{\R^d_0\times \X} (e^{<h,v>\psi(x)} -1) \la(dv) m(dx)).
	\end{equation}
Denote 
	$$
	\Psi_\la^h (r) :=  \exp({\int_{\R^d_0} (e^{<h,v>r} -1) \la(dv)}), \;\;r\in\R,
	$$
then 
\begin{equation}
\label{2}
\int_{\K} e^{<h\otimes \psi, \eta>} \mu_\la (d\eta)= e^{\int_{\X} \log (\Psi_\la^h (\psi(x))) m(dx)}.
\end{equation}
The latter relation can be considered as the definition of the measure $\mu_\la$ via its Laplace transform. \par
Let $\Lambda \subset B_c(\X \times \R_0^d) $. We will consider the cone $\mathbb{K}(\Lambda) \in B_c(\mathbb{K}(\X)) $, which consists of discrete vector-valued measures with support $\Lambda$. 
Let $\mu_\la$ be a probability measure on $(\K(\X),\B(\K(\X)))$.
For $\eta\in\K(\X)$ of the form $\eta=\sum_{x\in\tau(\eta)}v_x\delta_x$, there is a  canonical projection 
\begin{equation}
\label{canonical}
    \mathbb P_\Lambda:\mathbb K(\X)\ni \eta \mapsto \eta_ \Lambda:=\sum_{x \in \tau(\eta)\cap \Lambda}v_x \delta_x \in \mathbb K(\Lambda).
\end{equation}
The measure on $\mathbb{K}(\Lambda)$ is defined as
$$\mu_\la^\Lambda:=\mu_\la\circ \mathbb P_\Lambda^{-1},$$
and is characterized via its Laplace transform similar to \eqref{measure1}
\begin{equation}
\label{laplace1}
 \int_{\K(\Lambda)} e^{<h\otimes \psi, \eta>} \mu_\la^\Lambda (d\eta)= \exp\bigg( \int_{\Lambda} (e^{<h,v>\psi(x)} -1) \la(dv) m(dx)\bigg).
 \end{equation}
Denote 
	$$
	\Psi_\la^h (r) := \exp({\int_{\Lambda} (e^{<h,v>r} -1) \la(dv)}), \;\;r\in\R,
	$$
 equivalently we will have
 
\begin{equation}
\label{local 2}
	\int_{\K(\Lambda)} e^{<h\otimes \psi, \eta>} \mu_\la^\Lambda (d\eta)= e^{\int_{\Lambda} \log (1+\Psi_\la^h (\psi(x))) m(dx)}.
\end{equation}

The random measure $\mu_\la$ has independent increments, i.e $\eta(\Lambda_1), ..., \eta(\Lambda_N)$ are independent for any $N \in \N$ and disjoint $\Lambda_1,..., \Lambda_N \in B_c (\X).$ Or 
    \begin{equation}
    \label{independence}
        \int_{\K(\R^d)}\prod_{i=1}^N L_i^{h,\psi}(\eta(\Lambda_i))\mu_\la(d\eta)=\prod_{i=1}^N\int_{\K(\R^d)}L_i^{h,\psi}(\eta(\Lambda))\mu_\la(d\eta) 
    \end{equation}
     for any collection of $L_i^{h,\psi}\in L^\infty$, $ 1 \leq i \leq N.$\par
All local polynomial moments exist, i.e. for $n \in \N$
    \begin{equation}
    \label{moments}
    \mathbb{E}_{\mu_\la}[| \langle h \otimes \psi, \cdot \rangle|^n] \leq n!||\Psi_\la^h (\psi(x)))||_\infty ^n max\{1;  m(\Lambda)\}^n < \infty, 
    \end{equation}
where $\psi :\X \to \R$ is bounded and supported by $ \Lambda \in B_c(\X)$.
\vspace{10pt}

\subsection{Pair potential and partition of the space $\X$}
\vspace{10pt}
In this subsection, we introduce pair potential and the conditions it satisfies. Moreover, we introduce the partition of $\X$.
\begin{Assumption}
\label{pair potential}
Let
\begin{equation} 
    \phi:\X\times \X \to \mathbb{R}_0^{+}
\end{equation}
be positive, symmetric, bounded and measurable pair potential. Denote
$$0 \leq \sup _{x, y \in \mathbb{R}^d}\{|\phi(x, y)|\}=:\|\phi\|_{\infty}<\infty.$$
We suppose $\phi$ satisfies \textbf{Finite Range (FR)} condition, i.e. $\exists R \in (0, \infty)$ such that 
$$\phi(x,y)=0 \quad if \quad |x-y|>R,$$
and \textbf{Repulsion Condition (RC)}, i.e. $\exists \delta>0$ such that 
$$A_\delta:=\inf_{\substack{x,y \in \X \\ |x-y| \leq \delta}}\phi(x,y)> 0.$$

\end{Assumption}
The (FR) ensures that the potential energy is non-zero only when the particles are within a certain range of each other.
 The (RC) condition ensures a repulsive force between particles when they are very close. It prevents the particles from collapsing into each other and ensures that they maintain a minimum distance from each other. The bound $||\phi||_{\infty}$ indicates that the pair potential is uniformly bounded over all pairs of particles, implying that the interaction energy cannot become arbitrarily large.
 Summing up, the given conditions guarantee that the pair potential $\phi$ describes a physically reasonable interaction between particles, with a finite range and a repulsive component that prevents particles from collapsing into each other.

We are going to take a partition of $\X$ as in \cite{HK}.
 We consider the cubes indexed by $k \in \mathbb{Z}^d$
$$\mathcal{Q}_k:= [-\frac{1}{2}g, \frac{1}{2}g)^d+gk \subset \X, $$
with parameter $g:=\delta / \sqrt{d}$, where $\delta>0$ is such that the repulsion condition (RC) holds. These cubs form the partition of $\X$.
Cubes are centered at the point $gk$ with edge length $g>0$, Lebesgue volume $m(\mathcal{Q}_k)=g^d$ and diameter 
$$diam(\mathcal{Q}_k):=\sup_{x,y \in \mathcal{Q}_k}|x-y|_\X=\delta, $$
which means $\phi(x,y) \geq A_\delta$ for all $x,y \in \mathcal{Q}_k $.
We define also the family of neighbour cubes of $\mathcal{Q}_k$ for each 
$k \in \mathbb{Z}^d$ denoted by

\begin{equation}
\label{neighbour}
    \partial_\delta^\phi k:=\{ j \in \mathbb{Z}^d \backslash \{k\} \ | \ \exists x \in \mathcal{Q}_k, \exists y \in \mathcal{Q}_j: \ \phi(x,y) \neq 0  \}.
\end{equation}
The number of such neighbour cubes for every $\mathcal{Q}_k , \ k \in \mathbb{Z}^d$ can be estimated by 

\begin{equation}
\label{estimation}
  \sup_{k \in \mathbb{Z}^d}\partial_\delta^\phi k \leq m_\delta^\phi,   
\end{equation}
where $$m_\delta^\phi:=v_d d^{d/2}[R/\delta+1]^d  \quad with \quad v_d:=\dfrac{\pi^{d/2}}{\Gamma(d/2+1)}$$
is an interaction parameter.\\
To each index set $\mathcal{K}\Subset \mathbb{Z}^d$ there corresponds 
$$\Lambda_\mathcal{K}:=\bigsqcup_{k \in \mathcal{K}} \mathcal{Q}_k \in B(\X),$$
by $\mathcal{Q}_c(\X)$ is denoted the family of all such domains.\\
For $\Lambda \in B(\X) $ we define 
$$\mathcal{K}_\Lambda:=\{ j \in \mathbb{Z}^d \ | \  \mathcal{Q}_j \cap \Lambda \neq \varnothing \},$$
with   $|\mathcal{K}_\Lambda|$ number of cubes $\mathcal{Q}_k$ having non-void intersection with $\Lambda$ and
$$|\mathcal{K}_\Lambda|< \infty, \ \forall \Lambda \in B_c(\X).$$

\section{Local Gibbs specification}
\subsection{Hamiltonian and partition function}
\vspace{10pt}
We define local Hamiltonian and partition function, using the assumption on pair potential and partition introduced above, we obtain bounds for them. 
\begin{Definition}
For each $\eta=\sum_{x\in \tau(\eta)} v_x\delta_x$, $\xi=\sum_{y\in \tau(\xi)} v_y\delta_y$ and $\Lambda \in B_c(\X )$, relative energy (Hamiltonian) is given as  
\begin{equation}
\label{Hamiltonian}
    H_{\Lambda}(\eta| \xi):= \int_{\Lambda} \int_{\Lambda} \phi(x,y)\eta(dx)\eta(dy)+2\int_{\Lambda^c} \int_{\Lambda} \phi(x,y)\eta(dx)\xi(dy)
\end{equation}
or equivalently
$$ H_{\Lambda}(\eta| \xi):= \sum_{x,x' \in \tau(\eta)\cap \Lambda}\phi(x,x')(v_x,v_{x'})+2\sum_{\substack{x\in \tau(\eta)\cap \Lambda \\ y \in \tau(\xi )\cap \Lambda^c }}\phi(x,y)(v_x,v_y).$$
\end{Definition}

\begin{Lemma}
    For all $\eta, \xi \in \K(\X)$ and $\Lambda \in B_c(\X)$ the relative energy is finite, i.e.,
    $|H_{\Lambda}(\eta| \xi)|< \infty$.
\end{Lemma}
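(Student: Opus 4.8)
The plan is to bound each of the two terms in the Hamiltonian \eqref{Hamiltonian} separately, using the finite range of $\phi$, its uniform bound $\|\phi\|_\infty$, and the fact that $\eta,\xi\in\K(\X)$ means the local velocity functionals $V_\Lambda$ are finite on every compact set. Write $H_\Lambda(\eta|\xi)=H^{(1)}_\Lambda+2H^{(2)}_\Lambda$ where $H^{(1)}_\Lambda$ is the self-energy double sum over $x,x'\in\tau(\eta)\cap\Lambda$ and $H^{(2)}_\Lambda$ is the interaction sum between $x\in\tau(\eta)\cap\Lambda$ and $y\in\tau(\xi)\cap\Lambda^c$. By the Cauchy--Schwarz inequality on $\X$, $|(v_x,v_{x'})|\le|v_x|\,|v_{x'}|$, so $|H^{(1)}_\Lambda|\le\|\phi\|_\infty\big(\sum_{x\in\tau(\eta)\cap\Lambda}|v_x|\big)^2=\|\phi\|_\infty V_\Lambda(\eta)^2$. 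Since $\Lambda\in\B_c(\X)$ is compact and $\eta\in\K(\X)$ comes from a pinpointed configuration in $\Pi(\R^d_0\times\X)$, we have $V_\Lambda(\eta)<\infty$ by definition of $\Pi$, so $|H^{(1)}_\Lambda|<\infty$.

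For the second term, the key point is that the finite range condition (FR) restricts the interaction: $\phi(x,y)\ne0$ only if $|x-y|\le R$. Hence only those $y\in\tau(\xi)\cap\Lambda^c$ lying in the $R$-neighbourhood $\Lambda_R:=\{y\,:\,\mathrm{dist}(y,\Lambda)\le R\}$ contribute, and $\Lambda_R$ is again a bounded Borel set, hence contained in a compact set. Therefore
\[
|H^{(2)}_\Lambda|\le\|\phi\|_\infty\Big(\sum_{x\in\tau(\eta)\cap\Lambda}|v_x|\Big)\Big(\sum_{y\in\tau(\xi)\cap\Lambda_R}|v_y|\Big)=\|\phi\|_\infty\,V_\Lambda(\eta)\,V_{\Lambda_R}(\xi).
\]
Again $V_\Lambda(\eta)<\infty$ and $V_{\Lambda_R}(\xi)<\infty$ because $\xi\in\K(\X)$ and $\Lambda_R$ is relatively compact, so $|H^{(2)}_\Lambda|<\infty$. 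Combining, $|H_\Lambda(\eta|\xi)|\le\|\phi\|_\infty\big(V_\Lambda(\eta)^2+2V_\Lambda(\eta)V_{\Lambda_R}(\xi)\big)<\infty$.

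The only mildly delicate point — which I would state carefully rather than treat as routine — is the passage from "$\Lambda^c$" in the definition to the effective integration region $\Lambda_R\cap\Lambda^c$: one must check that the sum over $\tau(\xi)\cap\Lambda^c$ of $\phi(x,y)(v_x,v_y)$ is absolutely convergent and equals the sum over the bounded set $\tau(\xi)\cap\Lambda_R\cap\Lambda^c$, which is immediate once (FR) is invoked term by term since $\tau(\xi)$ is locally finite and only finitely many $y$'s fall in the compact enlargement $\overline{\Lambda_R}$. I do not anticipate a serious obstacle here; the finite range condition does all the work, and without it (e.g. for a merely integrable potential) a separate tempering argument on $\xi$ would be needed, but that is not required under Assumption \ref{pair potential}.
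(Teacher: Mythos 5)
Your proof is correct and follows essentially the same approach as the paper: bound $|\phi|$ by $\|\phi\|_\infty$, control the inner products by $|(v_x,v_y)|\le|v_x|\,|v_y|$, and invoke (FR) to replace the unbounded region $\Lambda^c$ by a bounded $R$-neighbourhood of $\Lambda$, on which the local velocity functional of $\xi$ is finite. The paper's set $\mathcal{U}_\Lambda$ (built from the partition cubes $\mathcal{Q}_k$ adjacent to $\Lambda$) plays exactly the role of your $\Lambda_R\cap\Lambda^c$, and the paper's shorthand $\eta(\Lambda)$, $\xi(\mathcal{U}_\Lambda)$ stands for precisely the quantities $V_\Lambda(\eta)$, $V_{\mathcal{U}_\Lambda}(\xi)$ that you spell out.
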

\begin{proof}
    Considering
    $$ |H_{\Lambda}(\eta| \xi)| \leq \eta(\Lambda)\eta(\Lambda)||\phi||_\infty+2\eta(\Lambda)\xi(\mathcal{U}_\Lambda)||\phi||_\infty,$$
    where $$\mathcal{U}_\Lambda:= \bigsqcup_{k \in \Z^d} \{\mathcal{Q}_k \ | \ \partial_\delta^\phi k \cap \mathcal{K}_\Lambda \neq \varnothing \} \cap \Lambda ^c \in B_c(\X) $$
    and having $\eta, \xi \in \K(\X)$, the claim follows.
\end{proof}

By the following lemma we aim show the lower bound for local Hamiltonian.
\begin{Lemma}
\label{lower bound}
  Let Assumption \ref{pair potential} hold, then for each $\eta, \xi \in \mathbb{K}\left(\mathbb{R}^d\right)$ and $\Lambda \in B_c\left(\mathbb{R}^d\right)$
  \begin{equation}
      \label{lower bound hamiltonian}
    H_{\Lambda}(\eta| \xi) \geq A \sum_{j \in \mathcal{K}_\Lambda}\eta_\Lambda(\mathcal{Q}_j)^2.
  \end{equation}  
  For each $k \in \Z^d$, we have
  \begin{equation}
      \label{lower bound k}
    H_{\mathcal{Q}_k}(\eta| \xi) \geq A \sum_{x\in\mathcal{Q}_k} \eta(x)^2 ,
  \end{equation} 
when $\xi=0$,
\begin{equation}
      \label{lower bound xi0}
    H_{\mathcal{Q}_k}(\eta_k| 0)  \geq A \sum_{x\in\mathcal{Q}_k} \eta(x)^2.
  \end{equation} 

\end{Lemma}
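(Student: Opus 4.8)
The plan is to reduce everything to the diagonal (self-interaction) part of the Hamiltonian, which is manifestly nonnegative, and to exploit the fact that the partition was chosen precisely so that $\phi(x,x') \ge A_\delta$ whenever $x,x'$ lie in the same cube. First I would set $A := A_\delta > 0$ and recall from the setup that $\operatorname{diam}(\mathcal{Q}_j) = \delta$, so by the Repulsion Condition (RC) we have $\phi(x,x') \ge A$ for all $x,x' \in \mathcal{Q}_j$ and all $j \in \Z^d$. Then I would split the double integral defining $H_\Lambda(\eta|\xi)$ into the diagonal-block contribution $\sum_{j \in \mathcal{K}_\Lambda} \int_{\mathcal{Q}_j \cap \Lambda}\int_{\mathcal{Q}_j \cap \Lambda} \phi(x,x')\,\eta(dx)\eta(dx')$ and the remaining off-block and boundary terms.

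The key observation is that the off-block term $\sum_{j \ne j'} \int\int \phi\,\eta\,\eta$ and the boundary term $2\int_{\Lambda^c}\int_\Lambda \phi\,\eta\,\xi$ are each $\ge 0$, since $\phi$ is positive (takes values in $\mathbb{R}_0^+$) and $\eta,\xi$ are nonnegative... — here I have to be slightly careful: $\eta$ is vector-valued, so the integrand is really $\phi(x,y)(v_x,v_y)$ and the inner products $(v_x,v_y)$ need not be nonnegative. This is the main obstacle, and I expect the intended reading is that the relevant quadratic forms are handled via the representation $H_\Lambda(\eta|\xi) = \sum_{x,x'\in\tau(\eta)\cap\Lambda}\phi(x,x')(v_x,v_{x'}) + 2\sum\phi(x,y)(v_x,v_y)$, grouping the first sum by cubes: within a single cube $\mathcal{Q}_j$, $\sum_{x,x'\in\tau(\eta)\cap\mathcal{Q}_j\cap\Lambda}\phi(x,x')(v_x,v_{x'}) \ge A\,\sum_{x,x'}(v_x,v_{x'}) = A\,\big|\sum_{x\in\tau(\eta)\cap\mathcal{Q}_j\cap\Lambda} v_x\big|^2 = A\,|\eta_\Lambda(\mathcal{Q}_j)|^2$, using that $A\,\mathrm{Id} \le \phi$ as a bound on the scalar coefficients and that $(v,v')\mapsto\phi(x,x')(v,v')$ dominates $A(v,v')$ only after summation reconstitutes a genuine Gram-type sum. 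The cleanest route is: fix $j$, write $w := \eta_\Lambda(\mathcal{Q}_j) = \sum_{x} v_x \in \X$, and note $\sum_{x,x'}\phi(x,x')(v_x,v_{x'}) - A|w|^2 = \sum_{x,x'}(\phi(x,x')-A)(v_x,v_{x'})$; since $\phi(x,x')-A \ge 0$ on the block, one still needs this residual to be $\ge 0$, which is not automatic for signed $(v_x,v_{x'})$ unless $\phi-A$ is constant on the block. I would therefore present the argument under the (evidently intended) reading that one only keeps the uniform lower bound $\phi \ge A\,\chi_{\text{same cube}}$ and discards the rest as a nonnegative remainder in the \emph{scalar-measure} sense, i.e. treating $\eta(\mathcal{Q}_j)$ as $|\eta|(\mathcal{Q}_j)$ or assuming the inner products are organized so the Gram structure applies; I would flag this and proceed with $H_\Lambda(\eta|\xi) \ge A\sum_{j\in\mathcal{K}_\Lambda}\eta_\Lambda(\mathcal{Q}_j)^2$.

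For the remaining two displays: \eqref{lower bound k} is the special case $\Lambda = \mathcal{Q}_k$, where $\mathcal{K}_\Lambda = \{k\}$, so \eqref{lower bound hamiltonian} gives $H_{\mathcal{Q}_k}(\eta|\xi) \ge A\,\eta_{\mathcal{Q}_k}(\mathcal{Q}_k)^2 = A\big(\sum_{x\in\mathcal{Q}_k}v_x\big)^2$; to obtain the stated $A\sum_{x\in\mathcal{Q}_k}\eta(x)^2$ one instead refrains from summing over $x$ inside the cube and simply bounds $\sum_{x,x'\in\tau(\eta)\cap\mathcal{Q}_k}\phi(x,x')(v_x,v_{x'}) \ge \sum_{x\in\tau(\eta)\cap\mathcal{Q}_k}\phi(x,x)|v_x|^2 \ge A\sum_{x\in\mathcal{Q}_k}|v_x|^2 = A\sum_{x\in\mathcal{Q}_k}\eta(x)^2$, after checking the off-diagonal part within the cube plus the boundary part is $\ge 0$ (again in the intended nonnegativity sense, e.g. if the $v_x$ are aligned or one uses the positive-potential convention). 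Finally, \eqref{lower bound xi0} is just \eqref{lower bound k} with $\xi = 0$ and $\eta$ replaced by its restriction $\eta_k$, for which the boundary term vanishes identically, so the bound is immediate. The main work is thus the cube-wise decomposition in the first display; the other two are corollaries obtained by choosing whether or not to collapse the in-cube sum.
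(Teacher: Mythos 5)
You have reconstructed the same argument the paper's proof uses --- decompose the quadratic form in $H_\Lambda(\eta|\xi)$ cube-by-cube, invoke (RC) to get $\phi(x,x')\ge A_\delta=:A$ on same-cube pairs, and discard the remaining contributions as ``nonnegative'' --- and, crucially, you have correctly identified the step that does not survive the passage from scalar marks to vector-valued measures. Here the summands are $\phi(x,x')(v_x,v_{x'})$ with the Euclidean inner product, and $(v_x,v_{x'})$ is not sign-definite; so neither the off-cube terms, nor the $\eta$--$\xi$ cross terms, nor the within-cube residual $\sum_{x,x'}(\phi(x,x')-A)(v_x,v_{x'})$ is known to be nonnegative. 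This is not a technicality. Take $\Lambda=\mathcal{Q}_k$, $\xi=0$, and three points in $\mathcal{Q}_k$ with $v_1=v_2=e_1$, $v_3=-2e_1$, so that $\eta_\Lambda(\mathcal{Q}_k)=0$. If $\phi\equiv A$ on the cube one gets $H_{\mathcal{Q}_k}(\eta|0)=A\,|v_1+v_2+v_3|^2=0<6A=A\sum_x|v_x|^2$, contradicting \eqref{lower bound k} and \eqref{lower bound xi0}; letting $\phi(x_1,x_3)$ exceed $A$ makes $H_{\mathcal{Q}_k}(\eta|0)$ strictly negative and contradicts the first inequality of the lemma as well. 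The gap you flag is therefore genuine and is already present in the paper's own proof, which never addresses it (and which in addition silently rewrites the Hamiltonian with a $\frac{1}{2}$ prefactor and a $\xi$--$\xi$ self-energy term absent from Definition 3.1, suggesting the argument was carried over verbatim from the scalar-mark setting of \cite{HK}, where $\eta(x)\eta(y)\ge 0$ automatically).

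Your proposal and the paper's proof are thus the same argument with the same hole; you have simply been candid about where it breaks. To make the lemma true one would need to strengthen Assumption \ref{pair potential} --- for instance, require that on each cube $\phi-A$ act as a positive-semidefinite kernel (so that $\sum_{x,x'}(\phi(x,x')-A)(v_x,v_{x'})\ge 0$ for arbitrary signed $v_x$), or constrain the velocities to a fixed convex cone so that all inner products are nonnegative, or replace the pointwise (RC) by a superstability-type inequality formulated directly for vector-valued configurations. None of these is in the assumptions as written, and without some such reinforcement the stated lower bounds do not hold.
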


\begin{proof}
We start with the definition of the Hamiltonian \( H_{\Lambda}(\eta \mid \xi) \) for the pair potential \( \phi \):
\begin{equation*}
\begin{aligned}
    H_{\Lambda}(\eta \mid \xi) &= \frac{1}{2} \sum_{x, y \in \Lambda} \phi(x-y)\eta(x)\eta(y) + \sum_{x \in \Lambda, y \notin \Lambda} \phi(x-y)\eta(x)\xi(y) \\
    &+ \frac{1}{2} \sum_{x, y \notin \Lambda} \phi(x-y)\xi(x)\xi(y).
\end{aligned}
\end{equation*}
Using the finite range (FR) and repulsion (RC) conditions on \( \phi \), we can bound the first term above as follows
\[
\frac{1}{2} \sum_{x, y \in \Lambda} \phi(x-y)\eta(x)\eta(y) \geq A \sum_{j \in \mathcal{K}_{\Lambda}} \sum_{x, y \in \mathcal{Q}_j} \eta(x)\eta(y).
\]
Since \( \phi \) is a positive pair potential, the cross terms involving \( \xi \) are non-negative and thus do not affect the inequality. Therefore, we have
\[
H_{\Lambda}(\eta \mid \xi) \geq A \sum_{j \in \mathcal{K}_{\Lambda}} \eta_{\Lambda}(\mathcal{Q}_j)^2.
\]
For the second inequality, we consider the local Hamiltonian \( H_{\mathcal{Q}_k}(\eta \mid \xi) \)
\[
H_{\mathcal{Q}_k}(\eta \mid \xi) = \frac{1}{2} \sum_{x, y \in \mathcal{Q}_k} \phi(x-y)\eta(x)\eta(y) + \sum_{x \in \mathcal{Q}_k, y \notin \mathcal{Q}_k} \phi(x-y)\eta(x)\xi(y).
\]

Again, using the positivity of \( \phi \), we can ignore the cross terms and obtain:
\[
H_{\mathcal{Q}_k}(\eta \mid \xi) \geq A \sum_{x \in \mathcal{Q}_k} \eta(x)^2.
\]

When \( \xi=0 \), the cross terms vanish, and we are left with
\[
H_{\mathcal{Q}_k}(\eta_k \mid 0) \geq A \sum_{x \in \mathcal{Q}_k} \eta(x)^2.
\]

This completes the proof.
\end{proof}

%\begin{Lemma}
   % Let Assumption \eqref{pair potential} hold. Then for each $\eta, \xi \in \K(\X)$ and $\Lambda \in B_c(\X)$ 
 % \begin{equation}
      \label{lower bound hamiltonian}
  %  H_{\Lambda}(\eta| \xi) \geq [A-2m^\phi ||\phi^-||^\infty ] \sum_{j \in \mathcal{K}_\Lambda}\eta_\Lambda(\mathcal{Q}_j)^2 - m^\phi ||\phi^-||^\infty \sum_{l \in \mathcal{K}_{\mathcal{U}_\Lambda}} \xi_{\Lambda^c}(\mathcal{Q}_l)^2.
 % \end{equation}  
 % For each $k \in \Z^d$, we have
  %\begin{equation}
   %   \label{lower bound k}
    %H_{\mathcal{Q}_k}(\eta| \xi) \geq [A-m^\phi ||\phi^-||_\infty ] \eta(\mathcal{Q}_k)^2 - ||\phi^-||_\infty \sum_{j \in \partial^\phi k} \xi_{\mathcal{Q}_k^c}(\mathcal{Q}_j)^2,
  %\end{equation} 
%when $\xi=0$,
%\begin{equation}
     % \label{lower bound xi0}
    %H_{\mathcal{Q}_k}(\eta_k| 0)  \geq [A-\textcolor{red}{2}m^\phi ||\phi^-||_\infty ] \eta(\mathcal{Q}_k)^2.
  %\end{equation} 
%\end{Lemma}

\begin{Definition}
  For each $\xi \in \K(\X)$ and $\Lambda \in B_c(\X)$ the partition function is defined
  $$Z_\Lambda(\xi):=\int_{\K(\Lambda)}\exp\{-H_{\Lambda}(\eta_{\Lambda}| \xi)\} \mu_\la^\Lambda (d\eta_{\Lambda}),$$
  where $\mu_\la^\Lambda$ is a  measure, having full support on $\K(\Lambda)$.
\end{Definition}

\begin{Lemma}
\label{partition function lemma}
    Let Assumption \ref{pair potential} hold. Then for any $ \xi \in \K(\X)$ and $\Lambda \in B_c(\X)$ 
    $$ 0<Z_\Lambda(\xi) < \infty. $$
For  $\phi \geq 0$,  $Z_\Lambda(\xi) \leq 1. $
\end{Lemma}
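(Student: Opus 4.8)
The plan is to establish the three assertions $Z_\Lambda(\xi)>0$, $Z_\Lambda(\xi)<\infty$, and (under $\phi\ge 0$) $Z_\Lambda(\xi)\le 1$ essentially by comparing the integrand $\exp\{-H_\Lambda(\eta_\Lambda|\xi)\}$ with the constant $1$ and exploiting that $\mu_\la^\Lambda$ is a genuine probability measure with full support on $\K(\Lambda)$. First I would record the crucial sign fact: by the positivity of the pair potential (Assumption \ref{pair potential}) together with Lemma \ref{lower bound}, we have $H_\Lambda(\eta_\Lambda|\xi)\ge 0$ for every $\eta\in\K(\X)$, hence $0\le \exp\{-H_\Lambda(\eta_\Lambda|\xi)\}\le 1$ pointwise. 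Integrating this inequality against the probability measure $\mu_\la^\Lambda$ immediately yields the upper bound
\[
Z_\Lambda(\xi)=\int_{\K(\Lambda)}\exp\{-H_\Lambda(\eta_\Lambda|\xi)\}\,\mu_\la^\Lambda(d\eta_\Lambda)\le \mu_\la^\Lambda(\K(\Lambda))=1,
\]
which covers both the finiteness claim and the last assertion in one stroke. Note that $\phi\ge 0$ is built into Assumption \ref{pair potential}, so the "$\phi\ge 0$" clause is really the generic case here; strictly it is the same computation.

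For the strict positivity $Z_\Lambda(\xi)>0$ I would argue that the integrand is strictly positive everywhere (being the exponential of a finite real number — here I invoke the preceding lemma that $|H_\Lambda(\eta|\xi)|<\infty$) and that $\mu_\la^\Lambda$ does not charge a null set only. More carefully: since $\mu_\la^\Lambda$ has full support on $\K(\Lambda)$ it is not the zero measure, and $\exp\{-H_\Lambda(\cdot|\xi)\}>0$ $\mu_\la^\Lambda$-a.e., so the integral of a strictly positive function against a nontrivial positive measure is strictly positive. One can make this quantitative: restrict to the event $\{\eta_\Lambda:\eta(\Lambda)\le N\}$ for large $N$; on this event $H_\Lambda(\eta_\Lambda|\xi)\le \|\phi\|_\infty N^2 + 2\|\phi\|_\infty N\,\xi(\mathcal U_\Lambda)=:C_N<\infty$ (using the bound from the first lemma of this subsection, with $\mathcal U_\Lambda$ as defined there), so
\[
Z_\Lambda(\xi)\ge e^{-C_N}\,\mu_\la^\Lambda\big(\{\eta_\Lambda:\eta(\Lambda)\le N\}\big),
\]
and the latter probability is positive for $N$ large since $\mu_\la^\Lambda(\K(\Lambda))=1$ and $\eta(\Lambda)<\infty$ $\mu_\la^\Lambda$-a.s. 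This gives $Z_\Lambda(\xi)>0$ with an explicit lower bound.

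I do not anticipate a serious obstacle: the only subtlety is making sure the measure-theoretic justification is clean, i.e. that $\mu_\la^\Lambda$-a.s. we have $\eta(\Lambda)<\infty$ (so that $H_\Lambda$ is a.s. finite and the bound $C_N$ is eventually nontrivial), which follows from $\mu_\la^\Lambda$ being supported on $\K(\Lambda)$ together with the Poisson/Lebesgue-Poisson structure and the moment bound \eqref{moments}. I would state the two bounds as the two displayed inequalities above and conclude $0<Z_\Lambda(\xi)\le 1<\infty$, remarking that for general (not necessarily nonnegative) $\phi$ one would instead only get $Z_\Lambda(\xi)\le \exp\{\text{(some positive constant)}\}<\infty$ via the lower bound on $H_\Lambda$ from Lemma \ref{lower bound}, but that under the standing Assumption \ref{pair potential} the sharper bound $Z_\Lambda(\xi)\le 1$ holds.
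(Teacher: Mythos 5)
Your proposal is correct and structurally matches the paper's argument: both derive $Z_\Lambda(\xi)\le 1$ (and hence finiteness) from $H_\Lambda\ge 0$ together with $\mu_\la^\Lambda$ being a probability measure, and both derive $Z_\Lambda(\xi)>0$ from the pointwise strict positivity of the integrand. Where you part ways is in the quantitative lower bound: you truncate on the event $\{\eta(\Lambda)\le N\}$ and use the crude bound $|H_\Lambda|\le\|\phi\|_\infty\,\eta(\Lambda)^2+2\|\phi\|_\infty\,\eta(\Lambda)\,\xi(\mathcal U_\Lambda)$ from the preceding lemma, whereas the paper applies Jensen's inequality, $Z_\Lambda(\xi)\ge\exp\{-\int_{\K(\Lambda)}H_\Lambda(\eta|\xi)\,\mu_\la^\Lambda(d\eta)\}$, and then unwinds $H_\Lambda$ via Young's inequality into a sum of squared cube masses $\eta_\Lambda(\mathcal Q_j)^2$ and $\xi_{\Lambda^c}(\mathcal Q_l)^2$. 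Your truncation argument is shorter and closes the lemma just as well; the paper's Jensen computation has the advantage that the resulting explicit lower bound (in the form of \eqref{lower bound partition}) is exactly the quantity that is reused in the proof of Lemma \ref{exponential} to control $1/Z_\Lambda(\xi)$, so it is doing double duty. One point in your favour: you correctly observe that finiteness of $Z_\Lambda$ follows outright from $Z_\Lambda\le 1$; the paper's intermediate claim that ``boundedness of $\phi$ and the finite range condition ensure that $H_\Lambda(\eta|\xi)$ is bounded above'' is, taken literally, false ($H_\Lambda$ grows like $\eta(\Lambda)^2$ and is certainly unbounded over $\K(\Lambda)$), and your rephrasing avoids this infelicity. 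Your remark that the ``$\phi\ge 0$'' clause is vacuous under Assumption \ref{pair potential} is also accurate.
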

\begin{proof}
Since \( \phi \) is non-negative and symmetric, the Hamiltonian \( H_\Lambda(\eta | \xi) \) is well-defined and finite for all \( \eta \) and \( \xi \). Therefore, \( e^{-H_\Lambda(\eta | \xi)} \) is positive and less than or equal to 1, which implies that \( Z_\Lambda(\xi) > 0 \). The boundedness of \( \phi \) and the finite range condition ensure that \( H_\Lambda(\eta | \xi) \) is bounded above, making \( Z_\Lambda(\xi) \) finite.\\
If \( \phi \geq 0 \), then \( H_\Lambda(\eta | \xi) \geq 0 \) and \( e^{-H_\Lambda(\eta | \xi)} \leq 1 \). Thus, the integral \( Z_\Lambda(\xi) \) is bounded above by the measure of \( \mathcal{K}(\Lambda) \), which is 1. Hence, \( Z_\Lambda(\xi) \leq 1 \).

For each $ \eta, \xi \in \K(\X)$ and $\Lambda \in B_c(\X)$ we have by definition
$$H_{\Lambda}(\eta| \xi):= \int_{\Lambda} \int_{\Lambda} \phi(x,y)\eta(dx)\eta(dy)+2\int_{\Lambda^c} \int_{\Lambda} \phi(x,y)\eta(dx)\xi(dy).$$
By Young's inequality with 
$\varepsilon$  (valid for every 
$\varepsilon > 0$ ) $ab \leq (\dfrac {a^2}{2\varepsilon}+\dfrac{b^2\varepsilon}{2}), a,b \geq 0$ and by definition of family of neighbour cubes \eqref{neighbour} 
\begin{equation*}
\begin{split}
&\int_{\Lambda} \int_{\Lambda} \eta(dx)\eta(dy)+2\int_{\Lambda^c} \int_{\Lambda} \eta(dx)\xi(dy)\\
    &=\sum_{j \in \mathcal{K}_\Lambda}\eta_\Lambda(\mathcal{Q}_j)^2+\sum_{j \in \mathcal{K}_{\Lambda}}\sum_{{l \in \mathcal{K}_{\Lambda}}\cap \partial^\phi j}\eta_\Lambda(\mathcal{Q}_j)\eta_\Lambda(\mathcal{Q}_l)
    +2\sum_{j \in \mathcal{K}_{\Lambda}}\sum_{l \in \mathcal{K}_{\Lambda^c} \cap \partial^\phi j } \eta_\Lambda(\mathcal{Q}_j)\xi_{\Lambda^c}(\mathcal{Q}_l)\\
    &\leq 
    \sum_{j \in \mathcal{K}_\Lambda}\eta_\Lambda(\mathcal{Q}_j)^2 +\sum_{j \in \mathcal{K}_{\Lambda}}m^\phi \eta_\Lambda(\mathcal{Q}_j)^2
   +\bigg(\sum_{j \in \mathcal{K}_{\Lambda}}\dfrac{m^\phi}{\varepsilon}  \eta_\Lambda(\mathcal{Q}_j)^2+\sum_{l \in \mathcal{K}_{\mathcal{U}_\Lambda}}\varepsilon m^\phi  \xi_{\Lambda^c}(\mathcal{Q}_l)^2\bigg)\\
    &=(1+\dfrac{\varepsilon+1}{\varepsilon}m^\phi )\sum_{j \in \mathcal{K}_\Lambda}\eta_\Lambda(\mathcal{Q}_j)^2 +m^\phi\varepsilon\sum_{l \in \mathcal{K}_{\mathcal{U}_\Lambda}}  \xi_{\Lambda^c}(\mathcal{Q}_l)^2.
\end{split}
\end{equation*}
 By definition of partition function and Jensen's inequality 
\begin{equation}
\label{lower bound partition}
    \begin{split}
       &Z_\Lambda(\xi) = \int_{\K(\Lambda)}\exp\{-H_{\Lambda}(\eta| \xi)\} \mu_\la^\Lambda (d\eta) \geq \exp\bigg\{-\int_{\K(\Lambda)}H_{\Lambda}(\eta| \xi)\ \mu_\la^\Lambda (d\eta) \bigg\}\\
       &\geq \exp\bigg\{-||\phi||_\infty \int_{\K(\Lambda)}\bigg[\int_{\Lambda} \int_{\Lambda} \eta(dx)\eta(dy)+2\int_{\Lambda^c} \int_{\Lambda} \eta(dx)\xi(dy)\bigg] \mu_\la^\Lambda (d\eta) \bigg\}\\
       &\geq \exp\bigg\{-||\phi||_\infty \int_{\K(\Lambda)}\bigg[(1+\dfrac{\varepsilon+1}{\varepsilon}m^\phi )\sum_{j \in \mathcal{K}_\Lambda}\eta_\Lambda(\mathcal{Q}_j)^2 +m^\phi\varepsilon\sum_{l \in \mathcal{K}_{\mathcal{U}_\Lambda}}  \xi_{\Lambda^c}(\mathcal{Q}_l)^2\bigg] \mu_\la^\Lambda (d\eta) \bigg\}.
    \end{split}
\end{equation}
\end{proof}
\vspace{10pt}

\subsection{Local Gibbs specification}
\vspace{10pt}
\begin{Definition}
    For each $\Lambda \in B_c(\X )$, the local Gibbs measures with boundary conditions $\xi \in \K(\X)$ are given by 
    $$\ga_{\Lambda}(d\eta|\xi):=\frac{1}{Z_\Lambda(\xi)}e^{-H_{\Lambda}(\eta| \xi)}\mu_\la^\Lambda (d\eta).$$
\end{Definition}
\begin{Definition}
    The local specification $\Pi=\{\pi_\Lambda\}_{\Lambda \in B_c(\X )}$ on $\K(\X)$ is a family of stochastic kernels
    \begin{equation} 
    \label{family of kernels}
        \mathcal{B}(\K(\X))\times \K(\X) \ni (B,\xi) \mapsto \pi_\Lambda(B | \xi) \in [0,1],
    \end{equation}
    where
    $$\pi_\Lambda(B | \xi):=\ga_{\Lambda}(B_{\Lambda, \xi}|\xi)$$
    $$B_{\Lambda, \xi}:=\{\eta_\Lambda \in \K(\Lambda)| \eta_\Lambda+\xi_{\Lambda^c } \in B\}\in \mathcal{B}(\K(\Lambda)).$$
\end{Definition}
\begin{Remark}

The family of kernels  \eqref{family of kernels} has the consistency property , which means that for all $\Lambda, \Tilde{\Lambda}\in B_c(\X )$ with $\Tilde{\Lambda} \subseteq \Lambda$
\begin{equation}
    \label{consistency}
    \int_{\K(\X)}\pi_{\Tilde{\Lambda}} (B | \eta) \pi_\Lambda(d\eta | \xi)=\pi_\Lambda(B | \xi)
\end{equation}
for all $B \in \mathcal{B}(\K(\X))$ and $\xi \in \K(\X) $.
\end{Remark}

\begin{Definition}
    A probability measure $\ga$ on $\K(\X)$ is called Gibbs measure (or state) with pair potential $\phi$ if it satisfies the Dobrushin-Lanford-Ruelle(DLR) equilibrium equation
    \begin{equation}
        \int_{\K(\X)}\pi_\Lambda(B| \eta)\ga(d\eta)=\ga(B)
    \end{equation}
 for all $B \in \mathcal{B}(\K(\X))$ and $\Lambda \in B_c(\X) $.   The associated set of all Gibbs states will be denoted by $G(\phi)$.
\end{Definition}
We are interested in the subset $G^t(\phi)$ of tempered Gibbs measures supported by $\K^t(\X)$. We start defining the set of tempered discrete Radon measures by 
\begin{equation}
    \K^t(\X):=\bigcap_{\alpha>0}\K_\alpha(\X),
\end{equation}
where 
\begin{equation}
    K_\alpha(\X):=\{\eta \in \K(\X)|M_\alpha(\eta)<\infty\} \in \mathcal{B}(\K(\X)),
\end{equation}
with
$$M_\alpha(\eta):=\bigg(\sum_{k \in \mathbb{Z}^d}\eta(Q_k)^2e^{-\alpha|k|}\bigg)^{1/2}.$$
The subset $G^t(\phi)$ of tempered Gibbs measures is defined as
\begin{equation}
    G^t(\phi):=G^t(\phi)\cap \mathcal{P}(\K^t(\X)),
\end{equation}
where $$\mathcal{P}(\K^t(\X)):=\{\ga \in \mathcal{P}(\K(\X)) | \ga(\K^t(\X))=1\}.$$

\section{Existence of Gibbs measures}
The existence of a Gibbs measure \( \gamma \) can be established by showing that the family of finite-dimensional distributions defined by the local specifications is consistent and satisfies the Kolmogorov extension theorem. The conditions of the pair potential \( \phi \) ensure that the interaction is stable and has finite range, which are key properties to apply the Dobrushin-Lanford-Ruelle (DLR) equations.\par
To show that \( G^t(\phi) \) is non-empty, we construct a measure \( \gamma \) by taking the weak limit of a sequence of finite-volume Gibbs measures with suitable boundary conditions. The existence of such a weak limit follows from the Prokhorov's theorem,  given the tightness of the family of finite-volume Gibbs measures, which in turn is a consequence of the properties of \( \phi \).
\begin{Theorem}{(Main Result)}
\label{Main Result}
Let $\phi:\X\times \X \to \R$ be such that Assumption \ref{pair potential} holds. Then there exists a Gibbs measure $\ga$ (at least one) corresponding to the potential $\phi$ and the measure $\mu_\la$, which is supported by $\K^t(\X)$. Therefore,
$$G^t(\phi)\neq \varnothing.$$
Furthermore, the set $G^t(\phi)$ is relatively compact in the topology $\mathcal{T}_\mathcal{Q}$.
\end{Theorem}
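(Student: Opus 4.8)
The plan is to follow the classical Dobrushin existence strategy adapted to the cone $\K(\X)$, in the spirit of \cite{HK}. The skeleton has three ingredients: (i) an exponential moment bound for the Lyapunov functional $\eta\mapsto\beta\eta(\mathcal Q_k)^2$ that is \emph{uniform} in the boundary condition $\xi$ restricted to a single cube; (ii) propagation of this bound to arbitrary finite volumes $\Lambda\in\mathcal Q_c(\X)$ via the consistency property \eqref{consistency}, with control of how the estimate depends on $\xi$ through the tempered seminorms $M_\alpha$; and (iii) a compactness/extraction argument producing a limit point of the net $\{\pi_\Lambda(\,\cdot\,|\xi)\}$ that solves the DLR equation and is concentrated on $\K^t(\X)$.

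First I would establish the local exponential integrability. Fix $k\in\Z^d$ and $\beta>0$ small. Using the lower bound \eqref{lower bound k}, $H_{\mathcal Q_k}(\eta|\xi)\ge A\sum_{x\in\mathcal Q_k}\eta(x)^2\ge A\,g^{-?}\eta(\mathcal Q_k)^2$ after a Cauchy–Schwarz/discreteness estimate (here the key point is that $\phi\ge A_\delta$ on a single cube so the self-interaction dominates a multiple of $\eta(\mathcal Q_k)^2$), hence
\[
\int_{\K(\mathcal Q_k)} e^{\beta\eta(\mathcal Q_k)^2}\,\gamma_{\mathcal Q_k}(d\eta|\xi)
\le \frac{1}{Z_{\mathcal Q_k}(\xi)}\int_{\K(\mathcal Q_k)} e^{(\beta-A')\eta(\mathcal Q_k)^2}\,\mu_\la^{\mathcal Q_k}(d\eta).
\]
For $\beta<A'$ the right-hand integral is finite because under $\mu_\la^{\mathcal Q_k}$ the variable $\eta(\mathcal Q_k)$ is a compound-Poisson-type quantity with all exponential moments controlled via \eqref{local 2} and the assumption $\int_{\R^d_0}|v|^n\la(dv)<\infty$; more precisely one expands $e^{-(A'-\beta)\eta(\mathcal Q_k)^2}\le 1$ is false, so instead one needs the genuine Gaussian-type tail of $\eta(\mathcal Q_k)$ under a Poisson measure with finite second moment, which gives a finite Laplace transform of $\eta(\mathcal Q_k)^2$ up to a threshold. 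Combined with the lower bound on $Z_{\mathcal Q_k}(\xi)$ from \eqref{lower bound partition} (which depends on $\xi$ only through $\sum_{l}\xi_{\Lambda^c}(\mathcal Q_l)^2$ over finitely many neighbouring cubes), this yields a bound
\[
\int e^{\beta\eta(\mathcal Q_k)^2}\,\pi_{\mathcal Q_k}(d\eta|\xi)\le C_1\exp\Big(C_2\sum_{l\in\partial^\phi_\delta k}\xi(\mathcal Q_l)^2\Big),
\]
with $C_1,C_2$ independent of $k$ and $\xi$.

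Next I would iterate over cubes. Writing $\Lambda_{\mathcal K}=\bigsqcup_{k\in\mathcal K}\mathcal Q_k$ and using \eqref{consistency} to peel off one cube at a time, the boundary seen by cube $\mathcal Q_k$ inside the larger specification is itself distributed according to $\pi_{\Lambda_{\mathcal K}}$, so one sets up a recursion for $u_k:=\mathbb E_{\pi_{\Lambda_{\mathcal K}}(\cdot|\xi)}[\beta\eta(\mathcal Q_k)^2]$ of the form $u_k\le c + \kappa\sum_{l\in\partial^\phi_\delta k}u_l + (\text{contribution of }\xi)$. Because the number of neighbours is at most $m^\phi_\delta$ (see \eqref{estimation}), choosing $g=\delta/\sqrt d$ small — i.e. the Dobrushin-type smallness built into the partition — makes $\kappa\, m^\phi_\delta<1$, and the recursion closes: one gets $\sup_k u_k\le$ const plus a term controlled by $M_\alpha(\xi)$, uniformly in $\mathcal K$. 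Summing against the weight $e^{-\alpha|k|}$ then bounds $\mathbb E_{\pi_{\Lambda_{\mathcal K}}(\cdot|\xi)}[M_\alpha(\eta)^2]$ uniformly in $\mathcal K$, which is exactly the tightness input: it shows that for tempered $\xi\in\K^t(\X)$ the net $\{\pi_{\Lambda_{\mathcal K}}(\cdot|\xi)\}$ is tight in $\mathcal T_{\mathcal Q}$ (sublevel sets of $M_\alpha$ being compact), and moreover any limit point charges $\K^t(\X)$ with full measure. I expect this propagation step — getting the constant in the recursion strictly below the $1/m^\phi_\delta$ threshold and tracking the $\xi$-dependence cleanly — to be the main obstacle, since it is where the finite-range and repulsion conditions must be used in tandem and where the choice of $g$ is pinned down.

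Finally I would assemble the existence statement. By Prokhorov, extract a subnet $\pi_{\Lambda_{\mathcal K_n}}(\cdot|\xi)\Rightarrow\gamma$ in $\mathcal T_{\mathcal Q}$. Using the Feller property of the kernels $\pi_\Lambda$ (continuity in $\xi$ of $\int f\,d\pi_\Lambda(\cdot|\xi)$ for bounded cylinder $f$, which follows from continuity of $H_\Lambda(\cdot|\xi)$ and of $Z_\Lambda(\xi)$ in the vague/$\mathcal T_{\mathcal Q}$ topology, plus dominated convergence justified by the moment bounds) together with the consistency \eqref{consistency}, pass to the limit in $\int\pi_\Lambda(B|\eta)\,\pi_{\Lambda_{\mathcal K_n}}(d\eta|\xi)=\pi_{\Lambda_{\mathcal K_n}}(B|\xi)$ to obtain $\int\pi_\Lambda(B|\eta)\,\gamma(d\eta)=\gamma(B)$ for all $\Lambda\in\mathcal Q_c(\X)$; a standard approximation argument extends this from $\mathcal Q_c(\X)$ to all of $B_c(\X)$. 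Hence $\gamma\in G(\phi)$, and since the uniform $M_\alpha$-moment bound passes to the limit, $\gamma(\K^t(\X))=1$, so $\gamma\in G^t(\phi)$ and $G^t(\phi)\neq\varnothing$. For relative compactness of $G^t(\phi)$ in $\mathcal T_{\mathcal Q}$: any $\gamma\in G^t(\phi)$ satisfies $\gamma=\int\pi_{\Lambda_{\mathcal K}}(\cdot|\xi)\,\gamma(d\xi)$, so integrating the uniform bound $\mathbb E_{\pi_{\Lambda_{\mathcal K}}(\cdot|\xi)}[M_\alpha(\eta)^2]\le F(M_\alpha(\xi))$ against $\gamma$ and using that $\gamma$ is itself tempered yields an a priori bound $\mathbb E_\gamma[M_\alpha(\eta)^2]\le$ const independent of $\gamma$ (again the sub-threshold contraction is what makes the fixed-point-type estimate close), whence the whole family $G^t(\phi)$ lies in a fixed $\mathcal T_{\mathcal Q}$-compact set of measures by Prokhorov. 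This completes the proof.
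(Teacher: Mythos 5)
Your overall strategy matches the paper's — lower bound on $H_\Lambda$, exponential moments of the Lyapunov functional on a single cube, propagation by consistency, extraction of a cluster point, and verification of the DLR equation — but there are two substantive places where your implementation diverges, plus one concrete error.

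First, the recursion object. You iterate on $u_k:=\mathbb E_{\pi_{\Lambda_\mathcal K}(\cdot|\xi)}[\beta\eta(\mathcal Q_k)^2]$, i.e.\ linear expectations of the Lyapunov functional. The paper instead works with $n_k(\mathcal K|\xi):=\log\int\exp\{\beta\eta(\mathcal Q_k)^2\}\,\pi_{\mathcal K}(d\eta|\xi)$ and closes the recursion by H\"older's inequality applied to the cross-term $\exp\{\varepsilon\|\phi\|_\infty\sum_{j\in\mathcal K\cap\partial^\phi k}\xi(\mathcal Q_j)^2\}$, followed by summing with weights $e^{-\alpha|k|}$. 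This gives \emph{uniform exponential} moment bounds (Proposition on \eqref{large estimate} and \eqref{large estimate 2}), from which the polynomial moments of Corollary \ref{support property} fall out. Your version would only deliver polynomial moments directly. That is enough for the Chebyshev step in the equicontinuity proof and for $\gamma(\K^t(\X))=1$, but it is strictly weaker, and some care would be needed to re-derive the exponential bound in the second half of the equicontinuity argument (the Laplace-transform estimate on the tail event $B_N\cap\K[\mathcal U,T]$).

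Second, where the smallness comes from. You write that ``choosing $g=\delta/\sqrt d$ small'' gives the contraction $\kappa\,m^\phi_\delta<1$. This is not right: $g$ is not a free parameter — it is pinned by the repulsion radius $\delta$ — and shrinking it makes $m^\phi_\delta=v_d d^{d/2}[R/\delta+1]^d$ \emph{larger}, not smaller. The tunable smallness in the paper is the Young/H\"older parameter $\varepsilon$, through $B_\varepsilon:=\varepsilon\|\phi\|_\infty$, together with the weight decay $\alpha$; the relevant condition is $B_\varepsilon e^{\alpha\vartheta}<\beta\le A$ (equation \eqref{beta inequality} and the choice of $\alpha$ with $\vartheta=R/g+\sqrt d$). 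Also, your passage $H_{\mathcal Q_k}(\eta|\xi)\ge A\sum_{x\in\mathcal Q_k}\eta(x)^2\ge A g^{-?}\eta(\mathcal Q_k)^2$ via ``Cauchy--Schwarz'' goes the wrong way; the paper's Lemma \ref{lower bound} gives $H_{\mathcal Q_k}(\eta|\xi)\ge A\,\eta(\mathcal Q_k)^2$ directly (since $\phi\ge A_\delta$ on the whole cube, $\sum_{x,y\in\mathcal Q_k}\phi(x,y)\eta(x)\eta(y)\ge A(\sum_{x\in\mathcal Q_k}\eta(x))^2$), with no extra volume factor.

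Third, the compactness mechanism. You invoke Prokhorov's theorem to extract a cluster point in $\mathcal T_\mathcal Q$. But $\mathcal T_\mathcal Q$ is the topology of $\mathcal Q$-local (setwise on local events) convergence, not the weak topology on a Polish space, so Prokhorov does not apply to it directly. The paper's route is local equicontinuity of the net $\{\pi_\Lambda(\cdot|\xi)\}$ (Definition of \eqref{equi def} and the subsequent Proposition), which is the correct compactness criterion for this setwise topology; the existence of a $\mathcal T_\mathcal Q$-cluster point follows from equicontinuity, not tightness. Your idea of using the Feller property of $\xi\mapsto\pi_\Lambda(B|\xi)$ to pass to the limit in the consistency identity is a reasonable observation — the paper passes over it silently — but note that for $\mathcal T_\mathcal Q$-convergence one uses instead that $\pi_\Lambda(B|\cdot)$ is a bounded $\mathcal B_\mathcal Q$-measurable local function (thanks to the finite-range assumption), which is what the setwise convergence sees. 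So for this last ingredient the fix is to replace ``Feller continuity plus weak convergence'' by ``locality plus $\mathcal Q$-local convergence.''
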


\begin{Definition}
    $\mathcal{T}_\mathcal{Q}$ is the topology of $\mathcal{Q}$-local convergence on the space of all probability measures $\mathcal{P}(\K(\X))$.
\end{Definition}
\vspace{10pt}
\subsection{Estimates on the local specification kernels}
\vspace{10pt}
 In order to prove the existence result, first of all we have to prove that local Gibbs specification kernels $\{\pi_\Lambda(d\eta| \xi) \ | \  \Lambda \in \mathcal{Q}_c(\mathbb{R}^d)\}$, with fixed tempered boundary condition is $\xi \in \K^t(\X)$, are locally equicontinuous.
    For that reason, we prove 
    \begin{enumerate}
       \item  exponentially integrability of the following map $ \mathbb k(\X) \ni \eta \mapsto \beta \eta (\mathcal{Q}_k)^2$, for each $k \in  \mathbb Z$, which has a role of Lyapunov functional,
    \item weak dependence of the specification kernels on boundary conditions, both for small volumes $\Lambda:= \mathcal{Q}_k$ and in the thermodynamic limit $ \Lambda \nearrow \X$.
    \end{enumerate}

\begin{Lemma}
\label{exponential}
    For fixed $k \in  \mathbb Z, \ \xi \in \K(\X), \  \Lambda \in \mathcal{B}_c(\X)  $ and $\beta \in [0, A],$

\begin{equation}
    \begin{aligned}
     &\int_{\K(\X)}\exp\bigg\{\beta \eta (\mathcal{Q}_k)^2\}\pi_\Lambda(d\eta| \xi )\\
     &\leq \exp\{\int_{\Lambda} \log (\Psi_\la (\psi(x))) m(dx)\bigg\}
     \times \exp \bigg\{m^\phi \varepsilon||\phi||_\infty\sum_{l \in \mathcal{K}_{\mathcal{U}_\Lambda}}  \xi_{\Lambda^c}(\mathcal{Q}_l)^2 \bigg\},    
    \end{aligned}
\end{equation}
where 
$$
	\Psi_\la (\psi(x)) = \exp(\int_{\mathbb R_0^d} (e^{||\phi||_\infty\big(1+\dfrac{\varepsilon+1}{\varepsilon}m^\phi \big) \sum_{j \in \mathcal{K}_\Lambda}(v \times v)_j\psi_j(x)} -1) \la(dv)).
	$$
\end{Lemma}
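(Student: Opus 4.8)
The plan is to unfold the specification kernel $\pi_\Lambda(\cdot\mid\xi)$ into the explicit finite-volume Gibbs density, bound the resulting numerator by $1$, and bound the partition function $Z_\Lambda(\xi)$ from below; the lower bound for $Z_\Lambda(\xi)$ will supply both factors on the right-hand side. First I would reduce to the case $\mathcal{Q}_k\subseteq\Lambda$ (the only one needed in the sequel), so that $(\eta_\Lambda+\xi_{\Lambda^c})(\mathcal{Q}_k)=\eta_\Lambda(\mathcal{Q}_k)$. By the definitions of $\pi_\Lambda$ and $\ga_\Lambda$,
$$
\int_{\K(\X)}\exp\{\beta\,\eta(\mathcal{Q}_k)^2\}\,\pi_\Lambda(d\eta\mid\xi)
=\frac{1}{Z_\Lambda(\xi)}\int_{\K(\Lambda)}\exp\bigl\{\beta\,\eta_\Lambda(\mathcal{Q}_k)^2-H_\Lambda(\eta_\Lambda\mid\xi)\bigr\}\,\mu_\la^\Lambda(d\eta_\Lambda).
$$
Since $k\in\mathcal{K}_\Lambda$, Lemma~\ref{lower bound} gives $H_\Lambda(\eta_\Lambda\mid\xi)\ge A\sum_{j\in\mathcal{K}_\Lambda}\eta_\Lambda(\mathcal{Q}_j)^2\ge A\,\eta_\Lambda(\mathcal{Q}_k)^2$, and since $\beta\le A$ the exponent in the numerator is $\le(\beta-A)\eta_\Lambda(\mathcal{Q}_k)^2\le0$. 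Hence the numerator is at most $\mu_\la^\Lambda(\K(\Lambda))=1$, so the whole expression is bounded by $1/Z_\Lambda(\xi)$.

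It then remains to bound $1/Z_\Lambda(\xi)$ by the claimed right-hand side. I would estimate $Z_\Lambda(\xi)$ from below by Jensen's inequality exactly as in the proof of Lemma~\ref{partition function lemma}: combining $Z_\Lambda(\xi)\ge\exp\{-\int_{\K(\Lambda)}H_\Lambda(\eta\mid\xi)\,\mu_\la^\Lambda(d\eta)\}$ with the bound on $|H_\Lambda(\eta\mid\xi)|$ and the Young-inequality decomposition established there yields
$$
\frac{1}{Z_\Lambda(\xi)}\le\exp\Bigl\{c\sum_{j\in\mathcal{K}_\Lambda}\int_{\K(\Lambda)}\eta_\Lambda(\mathcal{Q}_j)^2\,\mu_\la^\Lambda(d\eta)\Bigr\}\cdot\exp\Bigl\{m^\phi\varepsilon\|\phi\|_\infty\sum_{l\in\mathcal{K}_{\mathcal{U}_\Lambda}}\xi_{\Lambda^c}(\mathcal{Q}_l)^2\Bigr\},
$$
where $c:=\|\phi\|_\infty\bigl(1+\tfrac{\varepsilon+1}{\varepsilon}m^\phi\bigr)$. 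The second factor is already the $\xi$-dependent factor in the statement.

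For the first factor I would compute the second moment of the vector-valued measure under $\mu_\la^\Lambda$: differentiating the Laplace transform \eqref{laplace1} twice (equivalently, using a Campbell--Mecke identity for the underlying Poisson process together with \eqref{independence}), and using that $\la$ is rotationally symmetric so that $\int_{\R^d_0}v\,\la(dv)=0$ annihilates the square-of-the-mean contribution, one obtains $\int_{\K(\Lambda)}\eta_\Lambda(\mathcal{Q}_j)^2\,\mu_\la^\Lambda(d\eta)=m(\mathcal{Q}_j\cap\Lambda)\int_{\R^d_0}|v|^2\,\la(dv)$. Since $t\le e^{t}-1$ for every $t$,
$$
c\sum_{j\in\mathcal{K}_\Lambda}m(\mathcal{Q}_j\cap\Lambda)\int_{\R^d_0}|v|^2\,\la(dv)
\le\sum_{j\in\mathcal{K}_\Lambda}m(\mathcal{Q}_j\cap\Lambda)\int_{\R^d_0}\bigl(e^{c|v|^2}-1\bigr)\,\la(dv)
=\int_{\Lambda}\log\Psi_\la(\psi(x))\,m(dx),
$$
where $\psi$ collects the indicator functions of the cubes $\mathcal{Q}_j$ and $(v\times v)_j$ denotes $|v|^2$. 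Chaining this with the preceding inequality and with the numerator bound $\le1$ gives the assertion.

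The step I expect to be the main obstacle is the second-moment computation: $\eta\mapsto\eta(\mathcal{Q}_j)^2$ is a quadratic, vector-valued functional, hence not covered by the Laplace transform \eqref{laplace1} directly, so one must differentiate it twice (or invoke a Mecke identity), use the rotational symmetry of $\la$ to discard the first-moment term, and use the super-exponential (Maxwell-type) decay of $\la$ to guarantee $\int_{\R^d_0}(e^{c|v|^2}-1)\,\la(dv)<\infty$, which is precisely what makes $\Psi_\la$ finite. The remaining ingredients — unfolding the kernel, the numerator estimate, and the Jensen lower bound on $Z_\Lambda(\xi)$ — are routine given Lemma~\ref{lower bound} and Lemma~\ref{partition function lemma}.
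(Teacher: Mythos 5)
Your skeleton — unfold $\pi_\Lambda$ into the Gibbs density, bound the numerator by $1$ using $\beta\le A$ together with Lemma~\ref{lower bound}, then control $1/Z_\Lambda(\xi)$ via Jensen and the Young-inequality decomposition established in the proof of Lemma~\ref{partition function lemma} — mirrors the paper's argument exactly. The one place where you genuinely diverge is the last step, and there your route is the more careful of the two. The paper passes from the moment $\int\sum_{j}\eta_\Lambda(\mathcal{Q}_j)^2\,\mu_\la^\Lambda(d\eta)$ to $\int_\Lambda\log\Psi_\la\,m$ by invoking the Laplace-transform identity \eqref{laplace2}, but that identity is written for the functional $\langle\psi,\eta\times\eta\rangle_j=\sum_{x\in\tau(\eta)\cap\mathcal{Q}_j}|v_x|^2$, which is linear in the underlying configuration, whereas $\eta(\mathcal{Q}_j)^2=\bigl|\sum_{x\in\tau(\eta)\cap\mathcal{Q}_j}v_x\bigr|^2$ carries off-diagonal cross terms that the Poisson Laplace-transform formula does not directly capture. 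You instead compute the second moment explicitly via Campbell--Mecke and then use $t\le e^t-1$ to compare with $\log\Psi_\la$; this treats the cross terms honestly rather than implicitly. Your computation does use $\int_{\R^d_0}v\,\la(dv)=0$, i.e.\ rotational symmetry of $\la$, which the paper does not list among its standing assumptions although every example density $|v|^{-\alpha}e^{-|v|^\beta}\,dv$ is radial; without it you would pick up an extra, bounded, $\xi$-independent contribution from the squared mean, which does not break the estimate but should be flagged as an additional hypothesis. Modulo that caveat, the proposal is correct, lands on the same bound, and at the final step is arguably a cleaner and more rigorous version of the paper's own argument.
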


\begin{proof}
    By definition of local specification kernel and by Lemma \ref{lower bound} lower bound of local Hamiltonian  we have 
\begin{equation}
\label{lyapunov in}
    \begin{aligned}
        &\int_{\K(\X)}\exp \{\beta \eta (\mathcal{Q}_k)^2\}\pi_\Lambda(d\eta| \xi )\\ 
        &\leq \frac{1}{Z_\Lambda(\xi)}\int_{\K(\X)}\exp\bigg\{-[A-\beta] \eta _\Lambda(\mathcal{Q}_k)^2-A\sum_{j \in \mathcal{K}_\Lambda, j \neq k }\eta _\Lambda(\mathcal{Q}_k)^2 \bigg \}\mu_\la^\Lambda (d\eta) \\
        &\leq \dfrac{\int_{\K(\X)}\exp\bigg\{-[A-\beta] \eta _\Lambda(\mathcal{Q}_k)^2-A\sum_{j \in \mathcal{K}_\Lambda, j \neq k }\eta _\Lambda(\mathcal{Q}_k)^2 \bigg \}\mu_\la^\Lambda (d\eta)  }{\exp\bigg\{-||\phi||_\infty \int_{\K(\Lambda)}(1+\dfrac{1+\varepsilon}{\varepsilon} )\sum_{j \in \mathcal{K}_\Lambda}\eta_\Lambda(\mathcal{Q}_j)^2 +m^\phi \varepsilon\sum_{l \in \mathcal{K}_{\mathcal{U}_\Lambda}}  \xi_{\Lambda^c}(\mathcal{Q}_l)^2\bigg] \mu_\la^\Lambda (d\eta) \bigg\}}\\
        &\leq \exp \bigg\{||\phi||_\infty \int_{\K(\Lambda)}\big[(1+\dfrac{\varepsilon+1}{\varepsilon}m^\phi )\sum_{j \in \mathcal{K}_\Lambda}\eta_\Lambda(\mathcal{Q}_j)^2\big]\mu_\la^\Lambda(d\eta) \\
        &+m^\phi \varepsilon||\phi||_\infty\sum_{l \in \mathcal{K}_{\mathcal{U}_\Lambda}}  \xi_{\Lambda^c}(\mathcal{Q}_l)^2 \bigg\}.
    \end{aligned}
\end{equation}
Using the definition of the Poisson measure, we can define measure $\mu_\Lambda$ by its Laplace transform, similar to \eqref{laplace1} and  \eqref{local 2} for the first term we will obtain
\begin{equation*}
\begin{aligned}
&\int_{\K(\Lambda)} e^{||\phi||_\infty\big(1+\dfrac{\varepsilon+1}{\varepsilon}m^\phi \big)\sum_{j \in \mathcal{K}_\Lambda}<\psi, \eta \times \eta>_j} \mu_\la^\Lambda (d\eta)\\
&= \exp\bigg( \int_{\mathbb R_0^d \times \Lambda} (e^{||\phi||_\infty\big(1+\dfrac{\varepsilon+1}{\varepsilon}m^\phi \big) \sum_{j \in \mathcal{K}_\Lambda}(v \times v)_j\psi_j(x)} -1) \la(dv) m(dx)\bigg).
\end{aligned}
\end{equation*}
Denote 
	$$
	\Psi_\la (r) := \exp(\int_{\mathbb R_0^d} (e^{||\phi||_\infty\big(1+\dfrac{\varepsilon+1}{\varepsilon}m^\phi \big) \sum_{j \in \mathcal{K}_\Lambda}(v \times v)_jr_j} -1) \la(dv)), \;\;r\in\R,
	$$
 or equivalently 
 
\begin{equation}
\label{laplace2}
	\int_{\K(\Lambda)} e^{||\phi||_\infty\big(1+\dfrac{\varepsilon+1}{\varepsilon}m^\phi \big)\sum_{j \in \mathcal{K}_\Lambda}<\psi, \eta \times \eta>_j} \mu_\la^\Lambda (d\eta)= e^{\int_{\Lambda} \log (\Psi_\la (\psi(x))) m(dx)}.
\end{equation}
Plugging \eqref{laplace2} into \eqref{lyapunov in} we obtain
\begin{equation*}
    \begin{aligned}
     \int_{\K(\X)}\exp\bigg\{\beta \eta (\mathcal{Q}_k)^2\}\pi_\Lambda(d\eta| \xi )& \leq \exp\{\int_{\Lambda} \log (\Psi_\la (\psi(x))) m(dx)\bigg\}\\
     &\times \exp \bigg\{m^\phi \varepsilon||\phi||_\infty\sum_{l \in \mathcal{K}_{\mathcal{U}_\Lambda}}  \xi_{\Lambda^c}(\mathcal{Q}_l)^2 \bigg\}.    
    \end{aligned}
\end{equation*}
\end{proof}
\begin{Lemma}
    For fixed $k \in  \mathbb Z, \ \xi \in \K(\X), \  \Lambda \in \mathcal{B}_c(\X)  $ and $\beta \in [0, A],$
\begin{equation}
\label{estimate}
\begin{aligned}
&\int_{\K(\X)}\exp\{\beta \eta (\mathcal{Q}_k)^2\}\pi_k(d\eta| \xi )\\
&\leq \exp\{\int_{\mathcal{Q}_k} \log (\Psi_\la (\psi(x))) m(dx)\bigg\}
     \times \exp \bigg\{ \varepsilon||\phi||_\infty\sum_{j \in \partial^\phi k}  \xi(\mathcal{Q}_j)^2 \bigg\},   
    \end{aligned}
\end{equation}
where 
$$
	\Psi_\la (\psi(x)) = \exp({\int_{\mathbb R_0^d} (e^{||\phi||_\infty(1+\frac{m^\phi}{\varepsilon}) (v \times v)\psi(x)} -1) \la(dv)}).
	$$

\end{Lemma}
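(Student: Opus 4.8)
The plan is to specialize the estimate of the previous lemma to the single-cube volume $\Lambda=\mathcal Q_k$ and then track how the constants simplify. First I would start from the definition of $\pi_k(d\eta|\xi)=\gamma_{\mathcal Q_k}(d\eta|\xi)$ and use the lower bound \eqref{lower bound k} of Lemma \ref{lower bound}, namely $H_{\mathcal Q_k}(\eta|\xi)\ge A\sum_{x\in\mathcal Q_k}\eta(x)^2=A\,\eta_k(\mathcal Q_k)^2$ (there is only one cube in $\mathcal K_{\mathcal Q_k}$, so the sum over $j\neq k$ disappears). This yields, exactly as in \eqref{lyapunov in},
\[
\int_{\K(\X)}\exp\{\beta\eta(\mathcal Q_k)^2\}\pi_k(d\eta|\xi)
\le \frac{1}{Z_{\mathcal Q_k}(\xi)}\int_{\K(\mathcal Q_k)}\exp\{-[A-\beta]\,\eta(\mathcal Q_k)^2\}\,\mu_\la^{\mathcal Q_k}(d\eta),
\]
which is finite for $\beta\in[0,A]$ because $A-\beta\ge 0$ and $\mu_\la^{\mathcal Q_k}$ is a probability measure.

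Next I would insert the lower bound for the partition function. Applying Jensen's inequality to $Z_{\mathcal Q_k}(\xi)$ as in \eqref{lower bound partition}, but with $\Lambda=\mathcal Q_k$, the neighbour-cube bookkeeping collapses: $\mathcal K_\Lambda=\{k\}$, the self-interaction term contributes the factor $(1+\tfrac{1}{\varepsilon}m^\phi)$ (no extra $m^\phi$ from interior neighbours since $\mathcal Q_k$ has none inside $\Lambda$), and the boundary term becomes $\varepsilon\,m^\phi\sum_{l\in\mathcal K_{\mathcal U_{\mathcal Q_k}}}\xi_{\Lambda^c}(\mathcal Q_l)^2$, where $\mathcal K_{\mathcal U_{\mathcal Q_k}}$ is nothing but the set of neighbour cubes $\partial^\phi_\delta k$. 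Combining the numerator and the denominator gives
\[
\int_{\K(\X)}\exp\{\beta\eta(\mathcal Q_k)^2\}\pi_k(d\eta|\xi)
\le \exp\Big\{\|\phi\|_\infty\!\!\int_{\K(\mathcal Q_k)}\!\!\big(1+\tfrac{m^\phi}{\varepsilon}\big)\eta(\mathcal Q_k)^2\,\mu_\la^{\mathcal Q_k}(d\eta)\Big\}
\times\exp\Big\{\varepsilon\|\phi\|_\infty\!\!\sum_{j\in\partial^\phi k}\!\xi(\mathcal Q_j)^2\Big\}.
\]
Note that the genuinely negative exponent $-[A-\beta]\eta(\mathcal Q_k)^2$ from the numerator can simply be dropped (it is $\le 1$), and the $\beta$-dependence is absorbed into the condition $\beta\le A$.

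Finally I would evaluate the first factor by the Laplace-transform identity \eqref{laplace1}–\eqref{local 2} for $\mu_\la^{\mathcal Q_k}$, writing $\eta(\mathcal Q_k)^2=\langle\psi,\eta\times\eta\rangle$ with the appropriate indicator test function $\psi$ supported on $\mathcal Q_k$, exactly as in the passage producing \eqref{laplace2}. This turns the finite-volume exponential moment into $\exp\{\int_{\mathcal Q_k}\log(\Psi_\la(\psi(x)))\,m(dx)\}$ with
\[
\Psi_\la(\psi(x))=\exp\Big(\int_{\R^d_0}\big(e^{\|\phi\|_\infty(1+\frac{m^\phi}{\varepsilon})(v\times v)\psi(x)}-1\big)\la(dv)\Big),
\]
which is exactly the claimed form. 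Putting the two factors together yields \eqref{estimate}. The only slightly delicate point — the step I expect to need the most care — is the bookkeeping that shows the coefficient in front of $\sum_{j\in\mathcal K_\Lambda}\eta_\Lambda(\mathcal Q_j)^2$ drops from $(1+\tfrac{\varepsilon+1}{\varepsilon}m^\phi)$ to $(1+\tfrac{m^\phi}{\varepsilon})$ when $\Lambda$ is a single cube, because $\mathcal Q_k$ has no interior neighbours within $\Lambda$; everything else is a direct specialization of Lemma \ref{exponential} and its proof, together with the finiteness of all moments \eqref{moments}.
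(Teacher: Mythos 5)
Your proposal is correct and follows the same route as the paper, which in one line just says to specialize the argument of Lemma \ref{exponential} to the single-cube volume $\Lambda=\mathcal Q_k$ together with the lower bound \eqref{lower bound k} from Lemma \ref{lower bound}; you carry out exactly that specialization, correctly noting that the interior-neighbour cross term vanishes (so the coefficient drops to $1+\tfrac{m^\phi}{\varepsilon}$) and then finishing with the Laplace-transform identity. One small slip in the prose: you write that the boundary term ``becomes $\varepsilon\,m^\phi\sum_{l}\xi_{\Lambda^c}(\mathcal Q_l)^2$'' with the extra $m^\phi$, but since each exterior neighbour $l\in\partial^\phi k$ sees only the single interior cube $j=k$ when $\Lambda=\mathcal Q_k$, the $m^\phi$ disappears there too --- your final displayed inequality already has this right, so only that intermediate sentence needs correcting.
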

\begin{proof}
    Proof follows from Lemma \ref{lower bound} and exploiting the proof of Lemma \ref{exponential}.
\end{proof}

Using estimate \eqref{estimate} we aim to obtain similar moment estimates for arbitrary large domains $\Lambda_\mathcal{K}= \bigsqcup_{k \in \mathcal{K} }\mathcal{Q}_k \in \mathcal{Q}_c(\X)$ indexed by $\mathcal{K} \Subset \mathbb Z ^d$. When $\mathcal{K} \nearrow \mathbb Z ^d$ , $\Lambda_\mathcal{K} \nearrow \X$.

\begin{Proposition}
    Let $\beta \in [0, A]$. Then there exists $\mathcal{C}_\beta< \infty$ such that for all $k \in \mathbb Z ^d$ and $\xi \in \K^t(\X)$
    \begin{equation}
        \label{large estimate}
\limsup_{k \nearrow \mathbb Z ^d} \int_{\K(\X)} \exp{\{\beta \eta (\mathcal{Q}_k)^2 \}}\pi_\mathcal{K}(d\eta| \xi )\leq \mathcal{C}_\beta.
    \end{equation}
Moreover, for each $\alpha>0$ one finds $v_\alpha >0$ and  $\mathcal{C}_\beta< \infty$ such that 
\begin{equation}
        \label{large estimate 2}
\limsup_{k \nearrow \mathbb Z ^d} \int_{\K(\X)} \exp{\{v_\alpha M_\alpha(\eta)^2 \}}\pi_\mathcal{K}(d\eta| \xi )\leq \mathcal{C}_\alpha.
    \end{equation}
\end{Proposition}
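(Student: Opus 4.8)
The plan is to obtain \eqref{large estimate} from the single-cube estimate \eqref{estimate} by iterating along the consistency relation \eqref{consistency}, and then to deduce \eqref{large estimate 2} from \eqref{large estimate} by a weighted Hölder inequality. Fix $\beta\in(0,A]$ (the case $\beta=0$ is trivial), a cube index $k\in\Z^d$ and a tempered $\xi\in\K^t(\X)$. For $\mathcal{K}\Subset\Z^d$ with $k\notin\mathcal{K}$ the integrand is $\pi_\mathcal{K}(\cdot\,|\,\xi)$-a.s.\ equal to $\exp\{\beta\xi(\mathcal{Q}_k)^2\}$, so such $\mathcal{K}$ are irrelevant for the $\limsup$; assume $k\in\mathcal{K}$. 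Since $\mathcal{Q}_k\subseteq\Lambda_\mathcal{K}$, \eqref{consistency} gives
$$\int_{\K(\X)}\exp\{\beta\eta(\mathcal{Q}_k)^2\}\,\pi_\mathcal{K}(d\eta\,|\,\xi)=\int_{\K(\X)}\Big(\int_{\K(\X)}\exp\{\beta\eta(\mathcal{Q}_k)^2\}\,\pi_k(d\eta\,|\,\omega)\Big)\pi_\mathcal{K}(d\omega\,|\,\xi),$$
and \eqref{estimate} bounds the inner integral by $D\exp\{\varepsilon\|\phi\|_\infty\sum_{j\in\partial_\delta^\phi k}\omega(\mathcal{Q}_j)^2\}$, where $D:=\exp\{\int_{\mathcal{Q}_k}\log\Psi_\la(\psi(x))\,m(dx)\}$ is, by translation invariance of $m,\la,\phi$, a finite constant independent of $k$, $\xi$ and $\beta$ (finiteness being the integrability of $\la$ already used in Lemma \ref{exponential}).

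Next I would iterate. Using convexity of the exponential together with $|\partial_\delta^\phi k|\le m^\phi$ (cf.\ \eqref{estimation}), one splits one cube at a time, so that the last bound is controlled by an average over $j\in\partial_\delta^\phi k$ of the integrals $\int\exp\{m^\phi\varepsilon\|\phi\|_\infty\omega(\mathcal{Q}_j)^2\}\pi_\mathcal{K}(d\omega\,|\,\xi)$. Choosing $\varepsilon=\varepsilon(\beta)>0$ small enough that $m^\phi\varepsilon\|\phi\|_\infty\le A$, the exponent on each neighbour cube is again admissible, so for $j\in\mathcal{K}$ the same reduction applies with $\pi_j$ in place of $\pi_k$, whereas for $j\notin\mathcal{K}$ the integral equals $\exp\{m^\phi\varepsilon\|\phi\|_\infty\,\xi(\mathcal{Q}_j)^2\}$. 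Iterating along the neighbour graph \eqref{neighbour} until the cubes exit $\mathcal{K}$, the expected outcome is a bound
$$\int_{\K(\X)}\exp\{\beta\eta(\mathcal{Q}_k)^2\}\,\pi_\mathcal{K}(d\eta\,|\,\xi)\ \le\ \mathcal{C}_\beta\,\exp\Big\{c_\beta\sum_{j\in\Z^d\setminus\mathcal{K}}e^{-\alpha_\beta\,\mathrm{dist}(k,j)}\,\xi(\mathcal{Q}_j)^2\Big\}$$
with $\mathcal{C}_\beta,c_\beta,\alpha_\beta>0$ independent of $k,\mathcal{K},\xi$. Granting this, temperedness of $\xi$ finishes the first assertion: for any $\alpha'\in(0,\alpha_\beta)$ one has $\xi(\mathcal{Q}_j)^2\le M_{\alpha'}(\xi)^2e^{\alpha'|j|}$, hence by $|j|\le|k|+\mathrm{dist}(k,j)$
$$\sum_{j\in\Z^d\setminus\mathcal{K}}e^{-\alpha_\beta\mathrm{dist}(k,j)}\xi(\mathcal{Q}_j)^2\ \le\ M_{\alpha'}(\xi)^2e^{\alpha'|k|}\sum_{j\in\Z^d\setminus\mathcal{K}}e^{-(\alpha_\beta-\alpha')\mathrm{dist}(k,j)},$$
and since $\mathrm{dist}(k,\Z^d\setminus\mathcal{K})\to\infty$ as $\mathcal{K}\nearrow\Z^d$ while the number of indices at a given distance from $k$ grows only polynomially, the right-hand side tends to $0$; therefore $\limsup_{\mathcal{K}\nearrow\Z^d}\int_{\K(\X)}\exp\{\beta\eta(\mathcal{Q}_k)^2\}\pi_\mathcal{K}(d\eta\,|\,\xi)\le\mathcal{C}_\beta$, uniformly in $k$ and in tempered $\xi$.

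For \eqref{large estimate 2} I would derive it from \eqref{large estimate}. Fix $\alpha>0$, pick $\alpha'\in(0,\alpha)$ and set $S_{\alpha'}:=\sum_{j\in\Z^d}e^{-\alpha'|j|}<\infty$. Writing $\exp\{v_\alpha M_\alpha(\eta)^2\}=\prod_{k\in\Z^d}\exp\{v_\alpha e^{-\alpha|k|}\eta(\mathcal{Q}_k)^2\}$ and applying the generalized Hölder inequality with exponents $p_k$ defined by $p_k^{-1}:=S_{\alpha'}^{-1}e^{-\alpha'|k|}$ (so $\sum_k p_k^{-1}=1$, $p_k\ge1$),
$$\int_{\K(\X)}\exp\{v_\alpha M_\alpha(\eta)^2\}\,\pi_\mathcal{K}(d\eta\,|\,\xi)\ \le\ \prod_{k\in\Z^d}\Big(\int_{\K(\X)}\exp\{p_k v_\alpha e^{-\alpha|k|}\eta(\mathcal{Q}_k)^2\}\,\pi_\mathcal{K}(d\eta\,|\,\xi)\Big)^{1/p_k}.$$
Because $p_k v_\alpha e^{-\alpha|k|}=S_{\alpha'}v_\alpha e^{-(\alpha-\alpha')|k|}\le S_{\alpha'}v_\alpha$, the choice $v_\alpha:=A/S_{\alpha'}$ makes every exponent $\le A$, so by \eqref{large estimate} with $\beta=A$ each factor is, in the limit $\mathcal{K}\nearrow\Z^d$, bounded by $\mathcal{C}_A$; hence the product is $\le\mathcal{C}_A^{\sum_k p_k^{-1}}=\mathcal{C}_A=:\mathcal{C}_\alpha$.

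The main obstacle is the iteration step: one has to tune the Young parameter stage by stage so that the geometric multiplication of neighbour-cube terms is beaten by a genuine contraction, producing the summable weight $e^{-\alpha_\beta\mathrm{dist}(k,j)}$ on the boundary contribution instead of a diverging product of the constants $D$ (which satisfy $D\ge1$). This is a Dobrushin-type one-step estimate for the single-cube kernels $\pi_k(\cdot\,|\,\cdot)$, made possible by the choice $g=\delta/\sqrt d$ tying the cube size to the repulsion length, and it is where the lower bound of Lemma \ref{lower bound} and the moment properties of $\mu_\la$ are used most heavily; organizing the iteration through successive volumes $\Lambda_\mathcal{K}\nearrow\X$ and applying \eqref{consistency} between consecutive volumes, as in \cite{HK}, is a convenient way to make the telescoping and the role of temperedness transparent.
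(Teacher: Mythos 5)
Your proposal follows the same overall strategy as the paper --- pass the single-cube estimate \eqref{estimate} through the consistency property \eqref{consistency}, split the resulting boundary exponential over the neighbour cubes via H\"older, use the smallness of $\varepsilon\|\phi\|_\infty$ relative to $\beta$ as the contraction, and finish the second assertion with a weighted H\"older inequality (your choice $v_\alpha = A/S_{\alpha'}$ is essentially the paper's $v_\alpha=\beta\big[\sum_k e^{-\alpha|k|}\big]^{-1}$). The difference is purely organisational, but it matters for completing the argument. You propose to iterate the one-step bound along the neighbour graph and sum the resulting paths; as you yourself flag, this requires controlling a product of $D\ge 1$ factors against a per-step contraction, and you leave that combinatorics as ``the main obstacle''. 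The paper sidesteps the path expansion entirely: it sets $n_k(\mathcal{K}|\xi):=\log\int\exp\{\beta\eta(\mathcal{Q}_k)^2\}\pi_\mathcal{K}(d\eta|\xi)$, uses \eqref{estimate}, \eqref{consistency} and H\"older (with weights $t_j=\varepsilon\|\phi\|_\infty/\beta$) to produce a \emph{linear} system of inequalities
\[
n_k \ \le\ a_k \;+\; \tfrac{B_\varepsilon}{\beta}\sum_{j\in\mathcal{K}\cap\partial^\phi k} n_j \;+\; B_\varepsilon\!\!\sum_{j\in\mathcal{K}^c\cap\partial^\phi k}\!\!\xi(\mathcal{Q}_j)^2,
\]
and then closes it in one stroke by multiplying by $e^{-\alpha|k|}$ and summing over $k$: since $|j-k|\le\vartheta$ for neighbours, the double sum is reabsorbed into $\tfrac{B_\varepsilon}{\beta}e^{\alpha\vartheta}\sum_j n_j e^{-\alpha|j|}$, giving the explicit factor $\big[1-\tfrac{B_\varepsilon}{\beta}e^{\alpha\vartheta}\big]^{-1}$ --- exactly the closed-form version of the contraction you describe. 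This avoids any tree/path counting, makes temperedness enter cleanly through $M_\alpha(\xi_{\mathcal{K}^c})\to 0$, and lets one send $\alpha\searrow 0$ at the end to kill the residual boundary term. So your sketch has the right ideas and the right auxiliary steps, but the iteration step you identify as the obstacle is genuinely left open; the weighted-sum linearisation is the device the paper uses to close it, and if you want to make your sketch rigorous you should either adopt it or carry out the path-sum bookkeeping explicitly (verifying that the number of length-$n$ paths, bounded by $(m^\phi)^n$, is beaten by $(\varepsilon\|\phi\|_\infty/\beta)^n$ after the per-step H\"older weights are taken into account).
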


\begin{proof}
We define 
    $$0 \leq n_k(\mathcal{K}| \xi):=\log\Bigg\{ \int_{\K(\X)}\exp{\{\beta \eta (\mathcal{Q}_k)^2 \}}\pi_\mathcal{K}(d\eta| \xi )\Bigg\}, \  k \in \mathbb Z ^d $$
which is finite by Lemma \ref{exponential}. \par
At first we aim to find  global bound for the whole sequence $ (n_k(\mathcal{K}| \xi))_{k \in \mathbb Z ^d}$, which means that we have the required estimate on each of its components. Integrating both sides of \eqref{estimate} with respect to $\pi_\mathcal{K}(d\eta| \xi )$ and considering the consistency property   \eqref{consistency}, we obtain the following estimate for any $k \in \mathcal{K}$
\begin{align*}
    n_k(\mathcal{K}| \xi)&\leq log \Bigg \{\int_{\K(\X)}\exp\big[\int_{\mathcal{Q}_k} \log (\Psi_\la (\psi(x))) m(dx)\big]\\ \nonumber
     &\times \exp \big[ \varepsilon||\phi||_\infty\sum_{j \in \partial^\phi k}  \xi(\mathcal{Q}_j)^2\big]\pi_\mathcal{K}(d\eta| \xi ) \bigg\}, 
\end{align*}
 from where we get
\begin{align}
    n_k(\mathcal{K}| \xi)&\leq 
    \int_{\mathcal{Q}_k} \log (\Psi_\la (\psi(x))) m(dx)+\varepsilon||\phi||_\infty \sum_{j \in \mathcal{K}^c \cap\partial^\phi k}  \xi(\mathcal{Q}_j)^2 \nonumber\\
    &+log \bigg\{\int_{\mathbb{K}(\X)}\exp \bigg[ \varepsilon||\phi||_\infty \sum_{j \in \mathcal{K} \cap \partial^\phi k}  \xi(\mathcal{Q}_j)^2 \bigg]\pi_\mathcal{K}(d\eta| \xi )\Bigg\} \label{logintegral 2}.
\end{align}
In order to bound the last summand , we will apply H{\"o}lder inequality
$$\mu\Bigg( \bigsqcap_{j=1}^K f_j^{t_j}\Bigg) \leq \bigsqcap_{j=1}^K \mu^{t_j}(f_j),  \  \mu(f_j):=\int f_j d\mu$$
valid for any probability measure $\mu$, and non-negative functions $f_j$ and $t_j \geq 0$ such that $\sum_{j=1}^K t_j \leq 1, \ k \in \mathbb N$. Choose $\delta, \varepsilon>0$ such that 
\begin{equation}
\label{beta inequality}
  0 < B_\varepsilon:= \varepsilon||\phi||_\infty < \delta \beta  < \beta  \leq \beta_0:=A. 
\end{equation}
In our case $f_j:=\exp\{\beta\eta_j(\X)^2\}$ and 
$$t_j:= \frac{\varepsilon||\phi||_\infty}{\beta}, \ for  \  j \in \mathcal{K}\cap \partial ^\phi k. $$
Then by H{\"o}lder inequality we get that last summand in \eqref{logintegral 2} is dominated by 
\begin{equation}
    \label{dom}
    \sum_{j \in \mathcal{K}\cap \partial ^\phi k }\log \Bigg\{ \int_{\K(\X)}\exp(\beta \eta (\mathcal{Q}_j))\pi_\mathcal{K}(d\eta| \xi ) \Bigg\}^{t_j}=\frac{\varepsilon||\phi||_\infty}{\beta}\sum_{j \in \mathcal{K}\cap \partial ^\phi k }n_j(\mathcal{K}|\xi).
\end{equation}
Let  $k_0 \in \mathbb Z ^d $ be a fixed point in $\mathcal{K}\Subset \mathbb Z ^d$. Let $\vartheta:=R/g+\sqrt{d}$ be such that $|j-k_0|\leq \vartheta$ for all $j \in \partial^\phi{k_0}$. We choose $\alpha>0$ small enough, so that $B_\varepsilon e^{\alpha \vartheta} < \beta$. We take the sum over $k \in \mathcal{K}$ of the terms $n_{k_0}(\mathcal{K}|\xi)$ with the weights $\exp\{-\alpha|k|\}.$ By equations \eqref{logintegral 2} and \eqref{dom} we get for each $k \in \mathcal{K}$
\begin{equation}
\label{n sum}
\begin{aligned}
   n_{k_0}(\mathcal{K}|\xi)& \leq \sum_{k \in \mathcal{K}} [n_k(\mathcal{K}|\xi)\exp\{-\alpha|k|\}]\\
   &\leq \Bigg[ 1- \frac{B_\varepsilon}{\beta}e^{\alpha \vartheta}\Bigg]^{-1}\bigg[ \int_{\mathcal{Q}_k} \log (\Psi_\la (\psi(x))) m(dx)+B_\varepsilon  e^{\alpha \vartheta}  ||\xi_{\mathcal{K}^c}||^2_\alpha\bigg],
   \end{aligned}
\end{equation}
where 
$$\Bigg[ 1- \frac{B_\varepsilon}{\beta}e^{\alpha \vartheta}\Bigg]^{-1}=1+\frac{B_\varepsilon e^{\alpha \vartheta}}{\beta-B_\varepsilon e^{\alpha \vartheta}} \leq \frac{1}{1-\delta e^{\alpha \vartheta}}.$$
Plugging this last inequality in \eqref{n sum}, we obtain 
\begin{equation}
\label{n sum 2}
\begin{aligned}
   n_{k_0}(\mathcal{K}|\xi) &\leq \sum_{k \in \mathcal{K}} [n_k(\mathcal{K}|\xi)\exp\{-\alpha|k|\}]\\
   &\leq \Bigg(\frac{1}{1-\delta e^{\alpha \vartheta}}\Bigg)\bigg[ \int_{\mathcal{Q}_k} \log (\Psi_\la (\psi(x))) m(dx)+B_\varepsilon e^{\alpha \vartheta} M_\alpha(\xi_{\mathcal{K}^c})^2].
   \end{aligned}
\end{equation}
Considering the fact that   $M_\alpha(\xi_{\mathcal{K}^c})$ tends to $0$ as $\mathcal{K}\nearrow \mathbb Z^d$, we obtain
\begin{equation}
\label{limsup}
   \limsup_{k \nearrow \mathbb Z ^d}  \sum_{k \in \mathcal{K}} [n_k(\mathcal{K}|\xi)\exp\{-\alpha|k|\}] \leq \Bigg(\frac{1}{1-\delta e^{\alpha \vartheta}}\Bigg)\int_{\mathcal{Q}_k} \log (\Psi_\la (\psi(x))) m(dx) , 
\end{equation}
letting $\alpha \searrow 0$ we get 
$$\limsup_{k \nearrow \mathbb Z ^d}  n_{k_0}(\mathcal{K}|\xi) \leq  \Bigg(\frac{1}{1-\delta }\Bigg)\int_{\mathcal{Q}_k} \log (\Psi_\la (\psi(x))) m(dx)=:log \mathcal{C}_\beta,$$
which completes the proof.
Thus we have that for each $\beta$ satisfying \eqref{beta inequality} (and therefore for all $\beta \leq A$)
$$\limsup_{k \nearrow \mathbb Z ^d} \int_{\K(\X)} \exp{\{\beta \eta (\mathcal{Q}_k)^2 \}}\pi_\mathcal{K}(d\eta| \xi )\leq \mathcal{C}_\beta.$$
By H{\"o}lder inequality, estimate \eqref{large estimate 2} holds with 
$$v_\alpha:=\beta \Bigg[\sum_{k \in \mathbb Z^d}\exp\{-\alpha|k|\}\Bigg]^{-1}.  $$
Indeed, we have
\begin{align*}
    & \int_{\K(\X)} \exp{\Bigg\{v_\alpha \sum_{k \in \mathbb{Z}^d}\eta(Q_k)^2\exp\{-\alpha|k|\} \Bigg\}}\pi_\mathcal{K}(d\eta| \xi )\\
    & \leq \Bigg(\exp\Bigg\{\sum_{k \in \mathcal{K}} n_k(\mathcal{K}|\xi)\exp\{-\alpha|k|\}  \Bigg\} \Bigg)^{\frac{v_\alpha}{\beta_0}\exp\{-\alpha|k|\} } \exp \{M_\alpha(\xi_{\mathcal{K}^c})^2\}.
\end{align*}
We proof  the second estimate  using \eqref{limsup}  
\begin{equation*}
    \begin{aligned}
        \limsup_{k \nearrow \mathbb Z ^d} &\int_{\K(\X)} \exp{\{v_\alpha M_\alpha(\eta)^2 \}}\pi_\mathcal{K}(d\eta| \xi )\\
        &\leq \exp \Bigg\{\Bigg(\frac{1}{1-\delta e^{\alpha \vartheta}}\Bigg)\int_{\mathcal{Q}_k} \log (\Psi_\la (\psi(x))) m(dx)\Bigg \}=: \mathcal{C}_\alpha.
    \end{aligned}
\end{equation*}

\end{proof}

\begin{Corollary}
\label{support property}
    Let Assumption \ref{pair potential} hold. Then for all $\Lambda \in \mathcal{Q}_c(\X) $ and $N \in \mathbb N$ there exists $\mathcal{C}(\Lambda, N) < \infty$ such that
    $$\limsup_{\substack{\Tilde{\Lambda}\nearrow \X \\ \Tilde{\Lambda} \in \mathcal{Q}_c(\X) }} \int_{\K(\X)}\eta(\Lambda)^N\pi_{\Tilde{\Lambda}}(d\eta| \xi ) \leq \mathcal{C}(\Lambda, N)< \infty,$$
    where $\mathcal{C}(\Lambda, N)$ can be chosen uniformly for all $\xi \in \K^t(\X)$.
\end{Corollary}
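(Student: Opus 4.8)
The plan is to deduce this polynomial--moment bound from the exponential--moment estimate \eqref{large estimate} of the previous Proposition, by combining the elementary pointwise domination of powers by exponentials with the fact that every $\Lambda\in\mathcal{Q}_c(\X)$ is a \emph{finite} union of partition cubes. Write $\Lambda=\bigsqcup_{k\in\mathcal{K}_0}\mathcal{Q}_k$ with $\mathcal{K}_0\Subset\mathbb Z^d$ and set $M_0:=|\mathcal{K}_0|<\infty$. Since $\eta(\Lambda)=\sum_{k\in\mathcal{K}_0}\eta(\mathcal{Q}_k)$, the triangle inequality in $\R^d$ followed by the power--mean inequality (convexity of $t\mapsto t^N$) gives, for every $\eta\in\K(\X)$,
\[
\eta(\Lambda)^N\;\le\;\Big(\sum_{k\in\mathcal{K}_0}\eta(\mathcal{Q}_k)\Big)^N\;\le\;M_0^{\,N-1}\sum_{k\in\mathcal{K}_0}\eta(\mathcal{Q}_k)^N ,
\]
so it suffices to control $\int_{\K(\X)}\eta(\mathcal{Q}_k)^N\,\pi_{\tilde\Lambda}(d\eta|\xi)$ for a single cube, uniformly in $k$ and in $\xi\in\K^t(\X)$.

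For that, fix $\beta=A>0$ and use the elementary bound obtained by maximising $x\mapsto x^{N/2}e^{-\beta x}$ over $x\ge0$, namely $s^N=(s^2)^{N/2}\le(N/(2e\beta))^{N/2}e^{\beta s^2}$ for $s\ge0$; with $s=\eta(\mathcal{Q}_k)$ this yields $\eta(\mathcal{Q}_k)^N\le(N/(2eA))^{N/2}\exp\{A\,\eta(\mathcal{Q}_k)^2\}$. Integrating the preceding inequalities against $\pi_{\tilde\Lambda}(d\eta|\xi)$, summing over the finitely many $k\in\mathcal{K}_0$, using that the $\limsup$ of a finite sum of non-negative quantities is at most the sum of the $\limsup$'s, and finally invoking \eqref{large estimate} with $\beta=A$ (which bounds $\limsup_{\tilde\Lambda\nearrow\X}\int_{\K(\X)}e^{A\,\eta(\mathcal{Q}_k)^2}\pi_{\tilde\Lambda}(d\eta|\xi)$ by $\mathcal{C}_A<\infty$, uniformly in $k$ and in $\xi\in\K^t(\X)$), we obtain
\begin{align*}
\limsup_{\substack{\tilde\Lambda\nearrow\X\\ \tilde\Lambda\in\mathcal{Q}_c(\X)}}\int_{\K(\X)}\eta(\Lambda)^N\,\pi_{\tilde\Lambda}(d\eta|\xi)
&\le M_0^{\,N-1}\Big(\frac{N}{2eA}\Big)^{N/2}\sum_{k\in\mathcal{K}_0}\mathcal{C}_A\\
&= M_0^{\,N}\Big(\frac{N}{2eA}\Big)^{N/2}\mathcal{C}_A\;=:\;\mathcal{C}(\Lambda,N)\;<\;\infty ,
\end{align*}
which depends only on $\Lambda$ (through $M_0$), on $N$, and on the fixed model parameters $A$ and $\mathcal{C}_A$ --- in particular not on $\xi$.

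The substance of the argument lies entirely in the preceding Proposition; what remains is bookkeeping. The only points that deserve a line of justification are the interchange of $\limsup$ with the finite sum over $\mathcal{K}_0$, which is harmless because all integrands are non-negative, and the two uniformity statements invoked at the end: uniformity in the cube index $k$ (which, $\mathcal{K}_0$ being finite, could even be replaced by a maximum over $\mathcal{K}_0$) and uniformity in the boundary condition $\xi\in\K^t(\X)$, the latter being the genuinely important one and being exactly what \eqref{large estimate} provides. I do not expect any real obstacle here.
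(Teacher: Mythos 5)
Your argument is correct and is exactly the derivation the paper leaves unwritten: the Corollary is stated with no proof of its own, immediately after the Proposition, and the intended route is precisely yours --- dominate the polynomial $s^N$ by $(N/(2eA))^{N/2}e^{A s^2}$, reduce $\eta(\Lambda)^N$ to the finitely many cube moments $\eta(\mathcal{Q}_k)^N$, and invoke the uniform exponential bound \eqref{large estimate} with $\beta=A$. The only cosmetic point worth flagging is that $\eta(\Lambda)$ is formally a vector in $\R^d$, so the powers must be read as powers of the Euclidean norm; this is consistent with the paper's own loose usage of $\eta(\mathcal{Q}_k)^2$ throughout.
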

The corollary above claims uniform bounds for local Gibbs states.    
\vspace{10pt}
\subsection{Local equicontinuity}
\vspace{10pt}

    \begin{Definition}
        On the space of all probability measures $\mathcal{P}(\mathbb K(\X))$ we introduce the topology of $\mathcal{Q}$-local convergence. This topology, which we denote by $\mathcal{T}_{\mathcal{Q}}$, is defined as the coarsest topology making the maps $\mathcal{P}(\mathbb K(\X)) \ni \ga \mapsto \ga(B)$ continuous for all sets $B \in \mathcal{B}_\mathcal{Q}(\mathbb K(\X))$. Here,
        $$ \mathcal{B}_\mathcal{Q}(\mathbb K(\X)):= \bigcup_{\Lambda \in \mathcal{Q}_c(\X) }\mathcal{B}_\Lambda(\mathbb K(\X))$$
denotes the algebra of all local events associated with the partition cubes, where $\mathcal{B}_\Lambda(\mathbb K(\X))$ $:= \mathbb P_ \Lambda ^{-1}\mathcal{B}(\mathbb K(\Lambda))$ and the canonical projections $\mathbb P$ were defined by \eqref{canonical}.
    \end{Definition}
As we mentioned before, Gibbs measure is constructed as a cluster point of the net of local specification kernels.  A subset of measure from $\mathcal{P}(\mathbb K(\X))$  is relatively compact if and only if each of its nets has a cluster point in $\mathcal{P}(\mathbb K(\X))$, moreover every cluster point is obtained as a limit of a certain subnet. Proving equicontinuity of the local specification is sufficient for the existence of cluster points.
\begin{Definition}
    Fix $\xi \in \mathbb K^t(\X))$. The net $\{\pi_\Lambda(d\eta|\xi) \mid \Lambda \in \mathcal{Q}_c(\X)\}$ is called $\mathcal{Q}$-locally equicontinuous if for all $\tilde{\Lambda} \in \mathcal{Q}_c(\X) $ and for each sequence $\{B_N\}_{N \in \mathbb N } \subset \mathcal{B}_{\tilde{\Lambda}}(\mathbb K(\X))$ with 
    $ B_N \downarrow \varnothing$
    \begin{equation}
        \label{equi def}
        \lim_{N \to \infty}\limsup_{\substack{\Lambda \nearrow \X \\ \Lambda \in \mathcal{Q}_c(\X)}} \pi_\Lambda(B_N| \xi)=0.
    \end{equation}
\end{Definition}

\begin{Proposition}
    Let Assumption \ref{pair potential} hold. Then for each fixed $\xi \in \mathbb K^t(\X))$ the net $\{\pi_\Lambda(d\eta|\xi) \mid \Lambda \in \mathcal{Q}_c(\X)\}$ is locally equicontinuous.
\end{Proposition}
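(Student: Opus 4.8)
The plan is to deduce local equicontinuity directly from the exponential moment bounds obtained in the previous Proposition (estimates \eqref{large estimate} and \eqref{large estimate 2}) together with the consistency property \eqref{consistency} of the specification. The key observation is that the topology $\mathcal{T}_\mathcal{Q}$ is generated by local events, so it suffices to show that the net of kernels does not ``leak mass to infinity'' in the relevant local sense: concretely, that for a fixed $\tilde\Lambda \in \mathcal{Q}_c(\X)$ and a decreasing sequence $B_N \downarrow \varnothing$ in $\mathcal{B}_{\tilde\Lambda}(\K(\X))$, the quantities $\pi_\Lambda(B_N|\xi)$ become small uniformly in $\Lambda$ as $N\to\infty$.

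First I would reduce the problem to a tightness statement on $\K(\tilde\Lambda)$. Since $B_N \in \mathcal{B}_{\tilde\Lambda}(\K(\X))$, we may write $B_N = \mathbb P_{\tilde\Lambda}^{-1}(\hat B_N)$ with $\hat B_N \in \mathcal{B}(\K(\tilde\Lambda))$ and $\hat B_N \downarrow \varnothing$. The point is that the pushforward measures $\pi_\Lambda(\,\cdot\,|\xi)\circ \mathbb P_{\tilde\Lambda}^{-1}$ on $\K(\tilde\Lambda)$ form a uniformly tight family (as $\Lambda \nearrow \X$): indeed, by Corollary \ref{support property}, $\limsup_{\Lambda\nearrow\X}\int \eta(\tilde\Lambda)^N \pi_\Lambda(d\eta|\xi) \le \mathcal C(\tilde\Lambda,N) < \infty$ uniformly in $\xi \in \K^t(\X)$, and the sublevel sets $\{\eta_{\tilde\Lambda} : \eta(\tilde\Lambda) \le r\}$ are relatively compact in $\K(\tilde\Lambda)$ (this is where one uses that $\tilde\Lambda$ is a finite union of partition cubes, so bounded total mass on $\tilde\Lambda$ forces vague precompactness; the exponential bound \eqref{large estimate} even gives more than polynomial moments). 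Hence for every $\epsilon>0$ there is a compact $\mathcal{C}_\epsilon \subset \K(\tilde\Lambda)$ with $\sup_{\Lambda}\pi_\Lambda(\K(\X)\setminus \mathbb P_{\tilde\Lambda}^{-1}\mathcal{C}_\epsilon \mid \xi) < \epsilon$; this requires care since the bound from Corollary \ref{support property} is only a $\limsup$, so I would fix $\epsilon$, pick $N$ large, and then absorb the finitely many ``small'' $\Lambda$ into the tail by noting that each individual $\pi_\Lambda(\,\cdot\,|\xi)$ is itself a finite measure concentrated on configurations of finite mass on $\tilde\Lambda$.

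Then I would finish with the standard argument: split
\begin{equation*}
\pi_\Lambda(B_N|\xi) \le \pi_\Lambda\big(\mathbb P_{\tilde\Lambda}^{-1}(\hat B_N \cap \mathcal{C}_\epsilon)\,\big|\,\xi\big) + \pi_\Lambda\big(\K(\X)\setminus \mathbb P_{\tilde\Lambda}^{-1}\mathcal{C}_\epsilon \,\big|\,\xi\big) \le \pi_\Lambda\big(\mathbb P_{\tilde\Lambda}^{-1}(\hat B_N \cap \mathcal{C}_\epsilon)\,\big|\,\xi\big) + \epsilon.
\end{equation*}
Because $\hat B_N \downarrow \varnothing$ and $\mathcal{C}_\epsilon$ is compact, $\hat B_N \cap \mathcal{C}_\epsilon$ is eventually contained in arbitrarily small neighbourhoods and in fact (by compactness and the decreasing property) $\hat B_N \cap \mathcal{C}_\epsilon = \varnothing$ for $N$ large enough, or at least has measure tending to zero uniformly; taking $\limsup_{\Lambda\nearrow\X}$ then $N\to\infty$ gives $\limsup_N \limsup_\Lambda \pi_\Lambda(B_N|\xi) \le \epsilon$, and since $\epsilon$ was arbitrary, \eqref{equi def} follows.

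The main obstacle I anticipate is the interchange of the $\limsup_{\Lambda\nearrow\X}$ with the tightness/compactness argument: Corollary \ref{support property} only controls the $\limsup$ over large $\Lambda$, not every $\Lambda$ simultaneously, so one must argue carefully that the ``bad'' small volumes $\Lambda$ do not spoil uniform tightness — either by enlarging the compact set $\mathcal{C}_\epsilon$ to handle them (there are, for fixed purposes, only the volumes below some threshold, but that is still infinitely many as $\Lambda$ ranges over $\mathcal{Q}_c(\X)$) or, more cleanly, by observing that the whole scheme only ever needs the $\limsup$, so one should take $\limsup_\Lambda$ first inside the estimate above and only then optimise over $\epsilon$ and $N$. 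A secondary technical point is verifying that bounded mass on the finite region $\tilde\Lambda$ genuinely yields vague precompactness in $\K(\tilde\Lambda)$ — this uses the structure of $\K$ as the image of pinpointed configurations under $\CR$ and the fact that $\tilde\Lambda$ is relatively compact, so that only finitely many atoms with bounded total velocity can accumulate.
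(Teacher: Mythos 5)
There is a genuine gap in the core step, and it is precisely where the paper does most of its work. After splitting off the tail $\mathbb{K}[\mathcal{U},T]^c$ (which you handle correctly, via Chebyshev and Corollary~\ref{support property}, the same as the paper), you must show that $\pi_\Lambda(B_N \cap \mathbb{K}[\mathcal{U},T]\,|\,\xi) \to 0$ as $N\to\infty$, uniformly over large $\Lambda$. You assert this from ``because $\hat B_N \downarrow \varnothing$ and $\mathcal{C}_\epsilon$ is compact \dots\ $\hat B_N \cap \mathcal{C}_\epsilon = \varnothing$ for $N$ large enough''. This is false: a decreasing sequence of \emph{measurable} sets with empty intersection, met with a compact set, is not eventually empty (think of $B_N=(0,1/N)$ versus the compact $[0,1]$). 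The finite-intersection-property argument requires the sets $\hat B_N\cap\mathcal{C}_\epsilon$ to be closed, which nothing guarantees. Your fallback, ``or at least has measure tending to zero uniformly'', is exactly what needs to be proved and does not follow from tightness alone: tightness controls where mass lives, not how the measures $\pi_\Lambda(\cdot|\xi)\circ\mathbb{P}_{\tilde\Lambda}^{-1}$ react to a shrinking but non-compact family of Borel sets. For that you need a uniform domination of these projected kernels by a single fixed finite measure.

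This domination is the paper's mechanism, and it is not a cosmetic difference. Via the consistency property the paper rewrites $\pi_\Lambda(B_N\cap\mathbb{K}[\mathcal{U},T]\,|\,\xi)$ as $\int \pi_{\tilde\Lambda}(B_N\cap\mathbb{K}[\mathcal{U},T]\,|\,\eta)\,\pi_\Lambda(d\eta|\xi)$ (equation~\eqref{main}) and then bounds the inner kernel $\pi_{\tilde\Lambda}(\cdot|\eta)$ explicitly against the reference measure $\mu_\lambda^{\tilde\Lambda}$: combining the lower bound on the partition function~\eqref{lower bound partition} with the lower bound~\eqref{lower bound hamiltonian} on the Hamiltonian yields, on the set $\mathbb{K}[\mathcal{U},T]$, a Radon--Nikodym density bounded by a constant (depending on $T$, $\tilde\Lambda$, and the boundary mass of $\eta$), as in~\eqref{second sum}. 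Since $\mu_\lambda^{\tilde\Lambda}$ is a fixed finite measure and $B_N\downarrow\varnothing$, $\mu_\lambda^{\tilde\Lambda}(B_N\cap\mathbb{K}[\mathcal{U},T])\to 0$ by continuity from above, and the uniform density bound transfers this to $\pi_{\tilde\Lambda}$ and hence, after the outer integration, to $\pi_\Lambda$. To repair your proposal you would need to supply exactly this density estimate; without it the argument does not close.
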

\begin{proof}
    To prove we will split the set $B_N$ and then we will use Corollary \ref{support property}, consistency property \eqref{consistency}, lower bounds for Hamiltonian and partition function.\par
   Let  $\{B_N\}_{N \in \mathbb N } \subset \mathcal{B}_{\tilde{\Lambda}}(\mathbb K(\X))$ be sequence of sets  with $ B_N \downarrow \varnothing$
    and  $\tilde{\Lambda} \in \mathcal{Q}_c(\X)$ be fixed.
    We consider Borel subsets $\mathbb K[\mathcal{U}, T]$ in $\mathbb K(\X)$ containing measures $\eta$ whose local masses over $\mathcal{U}$ are bounded by a given $T>0$,
    $$\mathbb K[\mathcal{U}, T]:=\{\eta \in K(\X) \ | \ \eta(\mathcal{U})\leq T \}, \quad T>0,$$
    where 
    $$\mathcal{U}:=\bigsqcup\{\mathcal{Q}_j \ | \ \exists x \in \mathcal{Q}_j, \exists y \in \tilde{\Lambda}:\  |x-y|\leq R \} \in \mathcal{Q}_c(\X).$$
For each $\xi \in \mathbb K(\X)$ and $\Lambda \in \mathcal{Q}_c(\X)$ containing $\tilde{\Lambda}$, by consistency property we have
\begin{equation}
\label{main}
    \begin{aligned}
        \pi_\Lambda(B_N|\xi)&=\pi_\Lambda(B_N\cap  [\mathbb K(\mathcal{U}, T)]^c | \xi)+\int_{\mathbb K(\X)}\pi_{\tilde{\Lambda}}(B_N\cap  \mathbb K(\mathcal{U}, T) | \eta)\pi_\Lambda(d\eta |\xi)\\
        &=\pi_\Lambda(B_N\cap  [\mathbb K(\mathcal{U}, T)]^c | \xi)+\int_{\mathbb K(\X)}\dfrac{1}{Z_{\tilde{\Lambda}}(\eta)}\int_{\mathbb K(\tilde{\Lambda})}\mathbb{1}_{B_N\cap  \mathbb K(\mathcal{U}, T)}(\rho_{\tilde{\Lambda}}\cup \eta_{\tilde{\Lambda}^c})\\
        &\times \exp\{-H_{\tilde{\Lambda}}(\rho_{\tilde{\Lambda}}|\eta)\}\mu_\lambda^{\tilde{\Lambda}}(d\rho_{\tilde{\Lambda}})\pi_\Lambda(d\eta |\xi).
    \end{aligned}
\end{equation}
For the first summand by Chebyshev's inequality and support property \eqref{support property} we obtain 
\begin{equation*}
    \begin{aligned}
        \limsup_{\substack{\Lambda \nearrow \X \\ \Lambda \in \mathcal{Q}_c(\X)}} \pi_\Lambda(\mathbb K[\mathcal{U}, T]^c)&=\limsup_{\substack{\Lambda \nearrow \X\\\Lambda \in \mathcal{Q}_c(\X)}}\pi_\Lambda(\{\eta \in K(\X) \ | \ \eta(\mathcal{U})> T \}|\xi)\\
        &\leq \limsup_{\substack{\Lambda \nearrow \X\\\Lambda \in \mathcal{Q}_c(\X)}}\int_{\mathbb K(\X)}\dfrac{\eta(\mathcal{U})}{T}\pi_\Lambda(d\eta|\xi) \leq \dfrac{1}{T}\mathcal{C}(\mathcal{U},1) \leq \infty,
   \end{aligned}
\end{equation*}
and therefore
\begin{equation}
\label{first}
    \limsup_{\substack{\Lambda \nearrow \X \\ \Lambda \in \mathcal{Q}_c(\X)}} \pi_\Lambda(\mathbb K[\mathcal{U}, T]^c)=0 \quad as \quad T \nearrow \infty.
\end{equation}
For the second summand, we use Laplace transform and lower bound for Hamiltonian and partition function, 
 \begin{equation}
 \label{second sum}
     \begin{aligned}
        &\dfrac{1}{Z_{\tilde{\Lambda}}(\eta)}\int_{\mathbb K(\tilde{\Lambda})}\mathbb{1}_{B_N\cap  \mathbb K(\mathcal{U}, T)}(\rho_{\tilde{\Lambda}}\cup \eta_{\tilde{\Lambda}^c})\exp\{-H_{\tilde{\Lambda}}(\rho_{\tilde{\Lambda}}|\eta)\}\mu_\lambda^{\tilde{\Lambda}}(d\rho_{\tilde{\Lambda}})\\
        &\leq \exp \{||\phi||_\infty \int_{\K(\tilde{\Lambda})}\big[(1+\dfrac{\varepsilon+1}{\varepsilon}m^\phi )\sum_{j \in \mathcal{K}_{\tilde{\Lambda}} }\rho_{\tilde{\Lambda}}(\mathcal{Q}_j)^2\big]\mu_\lambda^{\tilde{\Lambda}}(d\rho_{\tilde{\Lambda}}) \\
        &+m^\phi \varepsilon||\phi||_\infty\sum_{l \in \mathcal{K}_{\mathcal{U}_{\tilde{\Lambda}}}}  \eta_{\tilde{\Lambda}^c}(\mathcal{Q}_l)^2 \}\int_{\mathbb K(\tilde{\Lambda})}\mathbb{1}_{B_N\cap  \mathbb K(\mathcal{U}, T)}(\rho_{\tilde{\Lambda}}\cup \eta_{\tilde{\Lambda}^c})\\
        &\times\exp\{-A\sum_{j \in \mathcal{K}_{\tilde{\Lambda}} }\rho_{\tilde{\Lambda}}(\mathcal{Q}_j)^2\}\mu_\lambda^{\tilde{\Lambda}}(d\rho_{\tilde{\Lambda}})\\
        &\leq \exp\{m^\phi \varepsilon||\phi||_\infty\sum_{l \in \mathcal{K}_{\mathcal{U}_{\tilde{\Lambda}}}}  \eta_{\tilde{\Lambda}^c}(\mathcal{Q}_l)^2 \} \exp\{\int_{\tilde{\Lambda}} \log (\Psi_\la^\varepsilon (\psi(x))) m(dx)\}\mu_\lambda^{\tilde{\Lambda}}({B_N\cap  \mathbb K(\mathcal{U}, T)}).
     \end{aligned}
 \end{equation}
 Right hand-side of inequality \eqref{second sum} tends to zero as $B_N \searrow \varnothing$, which means 
 \begin{equation}
 \label{second}
     \pi_{\tilde{\Lambda}}(B_N\cap  \mathbb K(\mathcal{U}, T) | \xi) \to 0  \quad as \quad B_N \searrow \varnothing.
 \end{equation}
 Plugging \eqref{first} and \eqref{second} in  \eqref{main} we get equicontinuity of the net $\{\pi_\Lambda(d\eta|\xi) \mid \Lambda \in \mathcal{Q}_c(\X)\}$. 
\end{proof}
We have the following Corollary which will be used to prove the main result.
\begin{Corollary}
\label{convergence}
     Let Assumption \ref{pair potential} hold, fix some order generating sequence $\{\Lambda_N\}_{N \in \mathbb N }$  $ \subset \mathcal{Q}_c(\X)$. Then, for each boundary condition $\xi \in \mathbb K^t(\X))$ a subsequence of $\{\pi_{\Lambda_N}(\cdot \mid \xi)\}_{N \in \mathbb N }$ converges $\mathcal{Q}$-locally to a probability measure $\gamma \in \mathcal{P}(\mathbb K(\X)))$, that means for all $B \in \mathcal{B_\mathcal{Q}}(\mathbb K(\X)),$
     \begin{equation}
     \label{limit}
         \pi_{\Lambda_N}(B|\xi) \to \gamma(B) \quad as \quad N \to \infty. 
     \end{equation}
     
\end{Corollary}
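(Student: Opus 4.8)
The plan is to obtain the Corollary as a direct consequence of the local equicontinuity proved in the preceding Proposition, combined with the uniform moment bounds of Corollary \ref{support property} and a standard compactness argument for the topology $\mathcal{T}_\mathcal{Q}$. First I would recall that, by definition, $\mathcal{T}_\mathcal{Q}$ is the coarsest topology on $\mathcal{P}(\mathbb K(\X))$ making all evaluations $\gamma \mapsto \gamma(B)$, $B \in \mathcal{B}_\mathcal{Q}(\mathbb K(\X))$, continuous; since $\mathcal{B}_\mathcal{Q}(\mathbb K(\X)) = \bigcup_{\Lambda \in \mathcal{Q}_c(\X)} \mathcal{B}_\Lambda(\mathbb K(\X))$ is a countable-at-each-level algebra generated by the partition cubes, the relevant compactness can be checked on this algebra. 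The net $\{\pi_{\Lambda_N}(\cdot \mid \xi)\}_{N}$ is indexed by the fixed order-generating sequence $\{\Lambda_N\}$, so it suffices to extract a convergent subsequence.

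The key steps, in order, are as follows. (i) Fix $\tilde\Lambda \in \mathcal{Q}_c(\X)$; by Corollary \ref{support property} the masses $\int_{\K(\X)} \eta(\tilde\Lambda)^N \pi_{\Lambda_M}(d\eta\mid\xi)$ are bounded uniformly in $M$ (for $M$ large), hence the family of push-forwards $\{\pi_{\Lambda_M}(\cdot\mid\xi)\circ \mathbb P_{\tilde\Lambda}^{-1}\}_M$ on $\mathbb K(\tilde\Lambda)$ is tight: the set $\{\eta_{\tilde\Lambda} : \eta_{\tilde\Lambda}(\tilde\Lambda)\le T\}$ is relatively compact in $\mathbb K(\tilde\Lambda)$ and carries mass $\ge 1 - \mathcal{C}(\tilde\Lambda,1)/T$ by Chebyshev. (ii) Apply Prokhorov's theorem on $\mathbb K(\tilde\Lambda)$ and a diagonal argument over an exhausting sequence $\tilde\Lambda = \Lambda_1 \subset \Lambda_2 \subset \cdots$ of cube-domains to produce a single subsequence $\{\Lambda_{M_k}\}$ along which all finite-volume marginals converge weakly; the consistency property \eqref{consistency} of the projective family guarantees the weak limits are a consistent family of marginals, so by Kolmogorov's extension theorem they define a probability measure $\gamma \in \mathcal{P}(\mathbb K(\X))$. (iii) Upgrade weak convergence of marginals to $\mathcal{Q}$-local convergence: for $B \in \mathcal{B}_{\tilde\Lambda}(\mathbb K(\X))$ the equicontinuity \eqref{equi def} established in the previous Proposition, together with an approximation of $\mathbbm 1_B$ from outside by sets with $\gamma$-null boundary, forces $\pi_{\Lambda_{M_k}}(B\mid\xi)\to\gamma(B)$; more precisely, for any $B_N \downarrow \varnothing$ the tail $\limsup_k \pi_{\Lambda_{M_k}}(B_N\mid\xi)$ vanishes as $N\to\infty$, which is exactly the uniform-integrability-type condition needed to pass the weak limit through the indicator of an arbitrary local event. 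Hence \eqref{limit} holds for every $B \in \mathcal{B}_\mathcal{Q}(\mathbb K(\X))$.

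I expect the main obstacle to be step (iii): weak convergence of the finite-volume marginals on each $\mathbb K(\tilde\Lambda)$ gives convergence of $\pi_{\Lambda_{M_k}}(B\mid\xi)$ only for $B$ whose boundary is $\gamma$-negligible, whereas $\mathcal{Q}$-local convergence demands it for every $B$ in the algebra $\mathcal{B}_\mathcal{Q}(\mathbb K(\X))$. The equicontinuity Proposition is precisely the tool that closes this gap — it plays the role of an equi-tightness/equi-continuity-of-measures statement ruling out mass escaping to the "boundary" of local events — so the argument is to invoke it to show that the set function $B \mapsto \limsup_k \pi_{\Lambda_{M_k}}(B\mid\xi)$ is continuous along decreasing sequences to $\varnothing$ on $\mathcal{B}_{\tilde\Lambda}$, which by a routine monotone-class / Vitali-Hahn-Saks type argument promotes the subsequential limit on a generating $\pi$-system to a genuine measure-level limit on the whole algebra. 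The remaining points — tightness from Corollary \ref{support property}, extraction of the diagonal subsequence, consistency of limits via \eqref{consistency}, and identification of $\gamma$ via Kolmogorov extension — are standard and I would only sketch them.
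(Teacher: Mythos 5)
The paper gives no explicit proof of this Corollary: it is stated as an immediate consequence of the local equicontinuity Proposition, following the same pattern as \cite{HK} and the general Preston--Georgii compactness result for specifications in the topology of local convergence. Your proposal fills this gap with a correct set of tools — the uniform moment bounds from Corollary \ref{support property}, compactness, and equicontinuity — but the route through Prokhorov's theorem and weak convergence of the marginals is both unnecessary and the source of a genuine soft spot in step (iii). Weak convergence of the push-forwards on $\K(\tilde\Lambda)$ yields $\pi_{\Lambda_{M_k}}(B\mid\xi)\to\gamma(B)$ only for $\gamma$-continuity sets $B$, and the local equicontinuity of the paper (which is continuity at $\varnothing$ along decreasing sequences, $B_N\downarrow\varnothing$) does not by itself give the uniform absolute continuity with respect to $\gamma$ that one would need to pass from continuity sets to arbitrary $B\in\mathcal{B}_{\tilde\Lambda}(\K(\X))$. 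The estimate $|\pi_{\Lambda_{M_k}}(B)-\pi_{\Lambda_{M_k}}(A)|\le\pi_{\Lambda_{M_k}}(A\triangle B)$ for $A$ approximating $B$ in $\gamma$-measure has a right-hand side you do not control uniformly in $k$, so the ``approximation by sets with $\gamma$-null boundary'' step as sketched does not close.

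The cleaner and intended argument bypasses weak convergence entirely. Pick, for each $\Lambda_N$, a countable algebra generating $\mathcal{B}_{\Lambda_N}(\K(\X))$ and let $\mathcal{A}_0$ be the (countable) union; by Cantor diagonal extract a subsequence $\{\Lambda_{N_k}\}$ with $\pi_{\Lambda_{N_k}}(A\mid\xi)\to\ell(A)$ for all $A\in\mathcal{A}_0$. The set function $\ell$ is finitely additive on $\mathcal{A}_0$, and the equicontinuity condition \eqref{equi def} gives precisely that $\ell$ is continuous at $\varnothing$ along decreasing sequences in $\mathcal{A}_0$, hence a pre-measure; Carath\'eodory extension yields $\gamma\in\mathcal{P}(\K(\X))$. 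To upgrade convergence from $\mathcal{A}_0$ to all of $\mathcal{B}_\mathcal{Q}(\K(\X))$, use equicontinuity again in a monotone-class argument: the class $\mathcal{C}:=\{B\in\mathcal{B}_{\tilde\Lambda}(\K(\X)):\pi_{\Lambda_{N_k}}(B\mid\xi)\to\gamma(B)\}$ is closed under monotone limits, because if $B_n\uparrow B$ with $B_n\in\mathcal{C}$, then $\pi_{\Lambda_{N_k}}(B\setminus B_n\mid\xi)$ is controlled by \eqref{equi def} since $B\setminus B_n\downarrow\varnothing$, and a three-term decomposition plus a diagonal-in-$(n,k)$ argument gives $B\in\mathcal{C}$. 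Since $\mathcal{C}$ contains the generating algebra, $\mathcal{C}=\mathcal{B}_{\tilde\Lambda}(\K(\X))$, completing the proof. Your step (i) (tightness) and the Kolmogorov extension are then superfluous; I would also caution that the relative-compactness assertion for $\{\eta:\eta(\tilde\Lambda)\le T\}$ in $\K(\tilde\Lambda)$ needs care because $\eta(\tilde\Lambda)$ is a vector and masses may cancel — the moment bounds in the paper control quantities like $\eta(\mathcal{Q}_k)^2$, not the total variation $\sum_{x\in\tau(\eta)\cap\tilde\Lambda}|v_x|$, which is what vague compactness actually requires.
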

\vspace{8pt}
\begin{proof}[Proof of Theorem \ref{Main Result}]

By Corollary \ref{convergence} we have that subsequence of $\{\pi_{\Lambda_N}(\cdot|\xi)\}_{N \in \mathbb N }$ which converges $\mathcal{Q}$-locally to a probability measure $\gamma \in \mathcal{P}(\mathbb K(\X)))$, where $\Lambda_N \nearrow \X$, $\Lambda_N  \in \mathcal{Q}_c(\X)$ is fixed order generating sequence and $\xi \in \mathbb K^t(\X))$ is a boundary condition. By DLR property, we can derive that $\gamma$ is indeed a Gibbs measure:
\begin{equation}
\label{equations}
\begin{aligned}
    \int_{\K(\X)}\pi_\Lambda(B| \eta)\ga(d\eta)&=\lim_{N \to \infty}\int_{\K(\X)}\pi_\Lambda(B| \eta)\pi_{\Lambda_N}(d\eta|\xi)\\
    &=\lim_{N \to \infty}\pi_{\Lambda_N}(B|\xi)=\gamma(B).
\end{aligned}
\end{equation}
We have  used \eqref{limit}  and consistency property in \eqref{equations}. \\
For the relative compactness of \( G^t(\phi) \), we use the fact that the set of Gibbs measures is tight and closed under the weak topology. Since the space of probability measures on a Polish space is itself Polish under the weak topology, Prokhorov's theorem ensures that tightness implies relative compactness.
Thus, we conclude that \( G^t(\phi) \) is non-empty and relatively compact in the topology \( \mathcal{T}_\mathcal{Q} \).
\end{proof}

\section{Conclusion}
We have introduced the problem of Gibbs measures on the cone of vector-valued Radon measures. We considered positive pair potential and constructed Gibbs measure using the DLR equilibrium equation and a measure $\mu_\lambda$. Then, we constrained it to a set of tempered Gibbs measures. We constructed a measure $\gamma$ as a limit of local specification kernels and proved that the limit measure is indeed Gibbs. Gibbs measure has wide applications in statistical mechanics,  physics, and specific areas of biology, particularly in modelling complex biological systems. 

\section{Aknowledgments}
Luca Di Persio would like to thank Yuri Kondratiev, to whom this work is dedicated.
  I was lucky to meet Yuri about 20 years ago at a crowded conference and immediately recognised a wholly singular, brilliant, inspiring light in him. 
 
 The mathematical relationship has been the excuse to consolidate an experience of human growth that marked me, helping me mature as a man before a mathematician.

  This work is the fruit of an intense collaboration with a true friend, an inspirer and a scientific reference point on whose generous shoulders I hope to continue working.
 \\
 
 Viktorya Vardanyan would like to express her gratitude to Yuri Kondratiev for his generous support. This work would not be possible without his considerable guidance and inspiration.

\end{document}